\let\cl@chapter\undefined
\crefname{figure}{Figure}{Figures}
\newcommand{\NN}{\mathbb{N}}
\newcommand{\RR}{\mathbb{R}}
\newcommand{\rd}{\mathrm{d}}
\newcommand{\re}{\mathrm{e}}
\newcommand{\cL}{\mathcal{L}}
\newcommand{\cE}{\mathcal{E}}
\newcommand{\etaLB}{\eta_{\mathrm{LB}}}
\newcommand{\pdmat}{D}
\newcommand{\retainlabel}[1]{\label{#1}\sbox0{\ref{#1}}}
\DeclarePairedDelimiter\paren{\lparen}{\rparen}
\DeclarePairedDelimiterX{\inprd}[2]{\langle}{\rangle}{{#1},{#2}}
\DeclarePairedDelimiterX{\setI}[2]{\{}{\}}{\,{#1}\ \delimsize| \ {#2}\,}
\DeclarePairedDelimiter{\setE}{\{}{\}}
\DeclarePairedDelimiter{\norm}{\|}{\|}
\DeclareMathOperator*{\argmin}{arg\,min}
\DeclareMathOperator{\rqmin}{\omega_{\min}}
\newtheorem{theorem}{Theorem}[section]
\newtheorem{lemma}[theorem]{Lemma}
\newtheorem{proposition}[theorem]{Proposition}
\theoremstyle{definition}
\theoremstyle{remark}
\newtheorem{remark}[theorem]{Remark}
\begin{document}

\title[Existence of Lagrange multiplier approach for gradient flows]{Existence results on\\ Lagrange multiplier approach for gradient flows\\ and application to optimization}

\author{Kenya Onuma}
\address{Department of Mathematical Informatics,
             Graduate School of Information Science and Technology,
             The University of Tokyo,
             Hongo 7-3-1, Bunkyo-ku, Tokyo, 113-0033, Japan}
\author{Shun Sato}
\address{Department of Mathematical Informatics,
             Graduate School of Information Science and Technology,
             The University of Tokyo,
             Hongo 7-3-1, Bunkyo-ku, Tokyo, 113-0033, Japan}
\email{shun@mist.i.u-tokyo.ac.jp}

\begin{abstract}
This paper deals with the geometric numerical integration of gradient flow and its application to optimization. 
Gradient flows often appear as model equations of various physical phenomena, and their dissipation laws are essential. 
Therefore, dissipative numerical methods, which are numerical methods replicating the dissipation law, have been studied in the literature. 
Recently, Cheng, Liu, and Shen proposed a novel dissipative method, the Lagrange multiplier approach, for gradient flows, which is computationally cheaper than existing dissipative methods. 
Although their efficacy is numerically confirmed in existing studies, the existence results of the Lagrange multiplier approach are not known in the literature. 
In this paper, we establish some existence results. 
We prove the existence of the solution under a relatively mild assumption. 
In addition, by restricting ourselves to a special case,  we show some existence and uniqueness results with concrete bounds. 
As gradient flows also appear in optimization, 
we further apply the latter results to optimization problems.
\end{abstract}
\keywords{Ordinary differential equations; Geometric numerical integration; Lagrange multiplier approach; Gradient flow; Optimization}

\maketitle

\section{Introduction}
\label{sec:intro}

In this paper, we consider the numerical integration of the gradient flow
\begin{equation}\label{eq:GF}
	\dot{x} = - \pdmat \nabla V(x),\quad x(0) = x_0,
\end{equation}
where $ x_0 \in \RR^n $ is an initial condition, $ V : \RR^n \to \RR $ is a differentiable function, 
and the matrix $ \pdmat \in \RR^{n \times n}$ is positive definite but not necessarily symmetric. 

Gradient flows~\eqref{eq:GF} are important class of ordinary differential equations (ODEs) that describe various physical phenomena. 
Consequently, numerical methods for gradient flow~\eqref{eq:GF} have also been intensively studied. 
In particular, specialized numerical schemes that replicate the dissipation law $ \frac{\rd}{\rd t} V(x(t)) \le 0  $ have been devised and investigated. 
Techniques devising and analyzing such a specialized numerical scheme replicating a geometric property of ODEs are known as ``geometric numerical integration'' techniques (cf.~\cite{HLW2010}). 

The discrete gradient method~\cite{G1996} (see also~\cite{MQR1998}) is the most popular specialized numerical method for gradient flows. 
Schemes based on the discrete gradient method are often superior to general-purpose methods, particularly for numerically difficult differential equations. 

Although these schemes allow us to employ a larger step size than general-purpose methods, 
they are usually more expensive per step. 
Most of these schemes require solving an $n$-dimensional nonlinear equation per step. 

Consequently, several techniques to enhance the computational efficiency have been studied in the literature (see, e.g., \cite{KS2022BIT} and references therein). 
In particular, Cheng, Liu, and Shen~\cite{CLSJ2020} proposed the Lagrange multiplier approach,
which replicates the dissipation law. 
Their proposed method is based on splitting the function $V$ in the form
\begin{equation}\label{eq:splitting}
    V(x) = \frac{1}{2}\inprd*{x}{Qx} + E(x),
\end{equation}
where $ Q \in \RR^{n\times n} $ is symmetric, 
and $ E:\RR^n \to \RR $ is a differentiable function. 
Note that the splitting is not unique ($ E $ may contain a quadratic term); 
however, when we consider physical problems, the function $V$ often includes a quadratic term so that we can naturally obtain a splitting (see, e.g., \cite{SXY2019SIREV}). 

Then, we can construct a numerical scheme preserving the dissipation law
by using the implicit midpoint rule for the quadratic term and special treatment for the nonlinear term $ E $, respectively (see \cref{pre:LM} for details). 
The resulting scheme requires solving a scalar nonlinear equation (and $n$-dimensional linear equations) per step, which is quite cheap. 

Unfortunately, however, the existence of a solution of the scalar nonlinear equation is not known in the literature. 
Therefore, in this paper, we establish some existence results. 
First, we prove the existence of the solution under a relatively mild assumption (\cref{existence:general}). 
Second, by restricting ourselves to a case where $ Q $ is the zero matrix, 
we establish several existence and uniqueness results with concrete bounds on the solution (\cref{existence:special}). 

The latter results are useful in the application of the Lagrange multiplier method to optimization problems
because the gradient flow~\eqref{eq:GF} also appears in the context of optimization. 
Investigations on the relationship between optimization methods and the discretization of ordinary differential equations (ODEs) have been reported in the 1980s (e.g., \cite{BB1989,S1988,ZG1990}). 
In addition, inspired by the pioneering work of Su, Boyd, and Cand{\`e}s~\cite{SBC2015} on Nesterov's accelerated gradient method, 
research in this direction has been active again in recent years (see \cite{Wilson2018} and the references therein). 

When considering the optimization, 
the dissipation law is also important: 
it is not merely a guarantee of the monotonic decrease of the function value, 
but can also be used to prove its convergence rate. 
Indeed, the discrete gradient method has recently been applied to optimization problems~\cite{REQS2021,RLS2018,ERRS2018}. 

Because the Lagrange multiplier method is much cheaper than the discrete gradient method per step, 
we consider its application to optimization problems. 
Indeed, when $ Q $ is the zero matrix and $ \pdmat $ is the identity matrix, 
the resulting scheme can be regarded as the well-known steepest descent method, adopting a new step size criterion. 
For the scheme, 
we show the convergence rates for several function classes: (i) general $L$-smooth functions, (ii) convex functions, and (iii) functions that satisfy the Polyak--{\L}ojasiewicz inequality (\cref{sec:cr}). 
We also introduce a relaxation technique to further enhance the computational efficiency (\cref{sec:proposed}). 

\smallskip

It may seem as though the scheme is merely a variant of the basic existing method; 
moreover, as shown in numerical experiments later, 
the actual behavior is almost the same as that of the existing method. 

However, the optimization methods proposed in this paper have the advantage that the relationship between continuous and discrete systems is clear in the proof of convergence rates (see, e.g., \cref{thm:gradrate,thm:LMrate1}). 
In existing research considering the correspondence between continuous and discrete systems, although the discussion on continuous systems is simple, it is often very complicated to prove the corresponding property in discrete systems.
A limitation of this paper is that we deal with the simplest gradient flows; however, 
it suggests that the above issues can be overcome by geometric numerical integration techniques even when we are dealing with more complicated ODEs that appear in optimization. 

The remainder of this paper is organized as follows. 
\Cref{sec:pre} presents the Lagrange multiplier approach and the relation between gradient flow and optimization. 
We show several existence results in \cref{sec:existence}
and convergence rates as an optimization method in \cref{sec:cr}. 
In \cref{sec:proposed}, we introduce a relaxation technique. 
These results are confirmed by numerical experiments in \cref{sec:ne}. 
Finally, \cref{sec:conclusion} concludes this paper. 

\section{Preliminaries}
\label{sec:pre}

\subsection{Lagrange multiplier approach}
\label{pre:LM}



In this section, we review the numerical method proposed by Cheng, Liu, and Shen~\cite{CLSJ2020}, 
which preserves the dissipation law. 

By introducing an auxiliary variable $\eta:\RR_{\geq 0} \to \RR$, we consider the following ODE based on the splitting~\eqref{eq:splitting}:
\begin{subequations}\label{eq:reform}
\begin{align}
		\dot{x} &= - \pdmat \paren*{Qx + \eta\nabla E(x)}, \retainlabel{eq:reform1} \\
		\displaystyle\frac{\rd}{\rd t} E(x) &= \eta\inprd*{\nabla E(x)}{\dot{x}}. \retainlabel{eq:reform2}
\end{align}
\end{subequations}
In view of the chain rule, the auxiliary variable $ \eta $ satisfies $ \eta (t) = 1 $ such that the ODE above is equivalent to the gradient flow~\eqref{eq:GF}. 

Based on the reformulated ODE~\eqref{eq:reform}, 
we consider the following scheme ($x_k \approx x \paren*{kh}$, $\eta_k \approx \eta\paren*{kh}$): 
\begin{subequations}\label{eq:LM}
\begin{align}
		\frac{x_{k+1} - x_{k}}{h} &= - \pdmat \paren*{Q\paren*{\frac{x_{k+1} + x_k}{2}} + \eta_k \nabla E \paren*{x_{k+1/2}^{\ast}}}, \retainlabel{eq:LM1}\\
		E\paren*{x_{k+1}} - E \paren*{x_k} &= \eta_k\inprd*{\nabla E\paren*{x_{k+1/2}^{\ast}}}{x_{k+1} - x_k}. \retainlabel{eq:LM2}
\end{align}
\end{subequations}
Here, $x_{k+1/2}^{\ast}$ is a numerical approximation of $x\paren*{(k+1/2)h}$, 
which can be computed without the unknown variables $ x_{k+1} $ and $ \eta_k $. 
For example, Cheng, Liu, and Shen~\cite{CLSJ2020} employed $x_{k+1/2}^{\ast} := \paren*{3x_k - x_{k-1}}/2$, 
and we employ $ x_{k+1/2}^{\ast} := x_k $ later. 

\begin{remark}\label{rem:DGLM}
The setting $x_{k+1/2}^{\ast} = \paren*{3x_k - x_{k-1}}/2$ is to achieve the second order accuracy, 
whereas the setting $x_{k+1/2}^{\ast} = x_k $ only achieves the first order accuracy. 
Although the former setting is better in terms of accuracy, 
the latter setting is easier to deal with in mathematical analysis. 
In addition, when we employ the latter, 
the Lagrange multiplier approach can be regarded as a special case of the discrete gradient method~\cite{G1996,MQR1998,MQR1999}: 
$ Q\paren*{\frac{x_{k+1} + x_k}{2}} + \eta_k \nabla E \paren*{x_k} $ satisfies the conditions of the discrete gradient. 
\end{remark}

\begin{theorem}[\cite{CLSJ2020}]\label{thm:LMdisp}
A solution $ x_{k+1} $ of the scheme~\eqref{eq:LM} satisfies the discrete dissipation law 
$
	f\paren*{x_{k+1}} \leq f\paren*{x_k}
$.
\end{theorem}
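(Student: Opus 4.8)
The plan is to compute the one-step difference $V\paren*{x_{k+1}} - V\paren*{x_k}$ of the dissipated function (written $f$ in the statement) and to exhibit it as a manifestly nonpositive quantity. Using the splitting~\eqref{eq:splitting}, I would break this difference into the quadratic contribution from $\frac{1}{2}\inprd*{x}{Qx}$ and the nonlinear contribution from $E$, treat each separately, and then recombine them.

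For the quadratic part, the symmetry of $Q$ yields the midpoint identity $\frac{1}{2}\inprd*{x_{k+1}}{Qx_{k+1}} - \frac{1}{2}\inprd*{x_k}{Qx_k} = \inprd*{x_{k+1} - x_k}{Q\paren*{\frac{x_{k+1}+x_k}{2}}}$, which is exactly the telescoping identity that makes the implicit midpoint rule compatible with the energy. For the nonlinear part, the second scheme equation~\eqref{eq:LM2} is designed precisely so that $E\paren*{x_{k+1}} - E\paren*{x_k} = \eta_k \inprd*{\nabla E\paren*{x_{k+1/2}^{\ast}}}{x_{k+1} - x_k}$. Adding the two contributions and using bilinearity of the inner product, I obtain $V\paren*{x_{k+1}} - V\paren*{x_k} = \inprd*{x_{k+1} - x_k}{Q\paren*{\frac{x_{k+1}+x_k}{2}} + \eta_k \nabla E\paren*{x_{k+1/2}^{\ast}}}$.

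The final step is to substitute the first scheme equation~\eqref{eq:LM1}. Abbreviating the bracketed vector as $w := Q\paren*{\frac{x_{k+1}+x_k}{2}} + \eta_k \nabla E\paren*{x_{k+1/2}^{\ast}}$, that equation reads $x_{k+1} - x_k = -h\pdmat w$, so the difference above becomes $-h\inprd*{\pdmat w}{w}$. Since $h > 0$ and $\pdmat$ is positive definite, this is nonpositive, which is the claimed dissipation law. The one technical point requiring a little care is that $\pdmat$ need not be symmetric; but for real vectors $\inprd*{\pdmat w}{w} = \tfrac{1}{2}\inprd*{w}{\paren*{\pdmat + \pdmat^{\top}}w}$, and the symmetric part $\pdmat + \pdmat^{\top}$ inherits positive definiteness, so $\inprd*{\pdmat w}{w} \ge 0$ as needed.

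There is no serious obstacle here: the result is a short algebraic verification, and the conceptual content lies entirely in the design of the scheme. Equation~\eqref{eq:LM2} is the discrete analogue of the auxiliary relation~\eqref{eq:reform2}, engineered so that the $E$-increment collapses into the same inner product as the quadratic increment; this is what lets~\eqref{eq:LM1} factor the entire one-step difference through the positive definite matrix $\pdmat$ and forces dissipation regardless of the value of the scalar multiplier $\eta_k$.
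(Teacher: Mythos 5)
Your proof is correct and is exactly the intended argument: the paper itself states this result without proof (importing it from \cite{CLSJ2020}), and your computation---telescoping the quadratic part via the symmetry of $Q$, invoking \eqref{eq:LM2} for the $E$-increment, and substituting \eqref{eq:LM1} to obtain $V(x_{k+1})-V(x_k) = -h\inprd*{\pdmat w}{w} \le 0$---is the standard derivation. Your side remark on reducing the nonsymmetric positive definite $\pdmat$ to its symmetric part is also correct and is the right way to justify the final inequality under the paper's convention that $\pdmat$ need not be symmetric.
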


By introducing 
\begin{align}
    p_k &:= \paren*{I+\frac{h}{2} \pdmat Q }^{-1} \paren*{I-\frac{h}{2} \pdmat Q } x_k, &
    q_k &:= \paren*{I+\frac{h}{2} \pdmat Q }^{-1} \pdmat \nabla E \paren*{x_k},
\end{align}
we can rewrite \eqref{eq:LM1} as follows:
$
	x_{k+1} = p_k - h \eta_k q_k
$.
Here, $p_k $ and $q_k$ can be computed by solving the linear equations with the same coefficient matrix $ I+\frac{h}{2} \pdmat Q $, which is invertible for sufficiently small $h$. 
Thus, we can compute $ \eta_k $ by solving 
\begin{equation}\label{eq:lm_nleq}
    F_h (\eta_k; x_k) := E\paren*{p_k - h \eta_k q_k} - E\paren*{x_k} - \eta_k\inprd*{\nabla E\paren*{x_k}}{p_k - h \eta_k q_k - x_k} = 0.
\end{equation}

The scheme~\eqref{eq:LM} requires solving two linear equations with $ n $ variables and a scalar nonlinear equation; moreover, because the coefficient matrix is constant, 
we can solve them quite efficiently. 
However, existence results for the nonlinear equation $F_h (\eta_k; x_k) = 0$ have not been established in the literature.

\subsection{Gradient flow and optimization}
\label{pre:gfopt}

In this section, we consider the unconstrained optimization problem
\begin{equation}\label{eq:opt}
 \min_{ x \in \RR^n} f(x), 
\end{equation}
where the function $f:\RR^n \to \RR$ is assumed to be $L$-smooth (i.e., the gradient is $L$-Lipschitz continuous) and satisfies $ \argmin f \neq \emptyset $. 
Under this assumption, there is an optimal solution $ x^\star $ and an optimal value $ f^\star = f ( x^\star)  $. 
In particular, we consider the relationship between the problem and gradient flow~\eqref{eq:DGI}:
\begin{equation}\label{eq:DGI}
	\dot{x} = - \nabla f(x),\qquad x(0) = x_0,
\end{equation}
which is a special case of the general gradient flow~\eqref{eq:GF}. 


$L$-smooth functions satisfy the following inequalities. 

\begin{lemma}\label{prop:ltan}
    If $f$ is $L$-smooth, the following inequalities hold for all $ x, y \in \RR^n ${\em :} 
    \begin{equation}
    -\frac{L}{2}\norm{y - x}^2 \le f(y) - f(x) - \inprd*{\nabla f(x)}{y-x} \le \frac{L}{2} \norm*{y-x}^2.
    \end{equation}
\end{lemma}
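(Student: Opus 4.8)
The plan is to reduce the multivariate statement to a one-dimensional calculation along the segment joining $x$ to $y$, and then control the resulting integral by combining the Cauchy--Schwarz inequality with the $L$-Lipschitz continuity of $\nabla f$. First I would introduce the auxiliary function $g(t) := f(x + t(y-x))$ for $t \in [0,1]$, whose derivative is $g'(t) = \inprd*{\nabla f(x + t(y-x))}{y-x}$ by the chain rule. The fundamental theorem of calculus then yields the integral representation
\begin{equation}
    f(y) - f(x) = \int_0^1 \inprd*{\nabla f(x + t(y-x))}{y-x}\,\rd t,
\end{equation}
and subtracting the linear term $\inprd*{\nabla f(x)}{y-x} = \int_0^1 \inprd*{\nabla f(x)}{y-x}\,\rd t$ recasts the quantity of interest as
\begin{equation}
    f(y) - f(x) - \inprd*{\nabla f(x)}{y-x} = \int_0^1 \inprd*{\nabla f(x + t(y-x)) - \nabla f(x)}{y-x}\,\rd t.
\end{equation}

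Next I would bound the integrand in absolute value. By the Cauchy--Schwarz inequality and the $L$-Lipschitz continuity of $\nabla f$, for each $t \in [0,1]$ we have
\begin{equation}
    \abs*{\inprd*{\nabla f(x + t(y-x)) - \nabla f(x)}{y-x}} \le \norm*{\nabla f(x + t(y-x)) - \nabla f(x)}\,\norm*{y-x} \le L t \norm*{y-x}^2,
\end{equation}
using $\norm*{(x + t(y-x)) - x} = t\norm*{y-x}$. Integrating this bound over $t \in [0,1]$ gives $\int_0^1 L t \norm*{y-x}^2\,\rd t = \frac{L}{2}\norm*{y-x}^2$, and since the integral is bounded in absolute value by this quantity, both the upper and lower inequalities follow simultaneously.

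I do not expect a genuine obstacle here, as this is a classical estimate; the only point requiring a little care is justifying the differentiation under the integral and the chain rule, which is immediate once we note that $L$-smoothness guarantees $\nabla f$ is continuous, so $g$ is continuously differentiable on $[0,1]$. The symmetric two-sided form of the conclusion is precisely what the absolute-value bound on the integrand delivers, so no separate argument for the lower bound is needed.
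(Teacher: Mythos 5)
Your proof is correct: the integral representation along the segment, followed by Cauchy--Schwarz and the $L$-Lipschitz bound on the gradient increment, is the standard textbook argument and delivers both inequalities at once via the absolute-value estimate. The paper itself states this lemma without proof (it is a classical fact about $L$-smooth functions), so there is nothing to contrast with; your only loose phrasing is the reference to ``differentiation under the integral,'' which is not actually needed --- you only use the fundamental theorem of calculus for the $C^1$ function $g$, which is justified exactly as you say by continuity of $\nabla f$.
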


We sometimes assume that the objective function $f$ is convex or satisfies the Polyak--{\L}ojasiewicz (P{\L}) inequality with parameter $\mu>0$ (cf.~\cite{KNS2016}):
\begin{equation}\label{eq:PL}
    \frac{1}{2} \norm*{ \nabla f (x) }^2 \ge \mu \paren*{ f(x) - f^\star }.
\end{equation}


Note that a $ \mu $-strongly convex function satisfies the P{\L} inequality with parameter $\mu$. 
In addition, if a function is $L$-smooth and satisfies the P{\L} inequality with parameter $\mu$, $L\ge \mu $ holds~\cite{GGGM2021}. 

When the objective function $f$ is strictly convex, 
the gradient flow~\eqref{eq:DGI} satisfies the following proposition. 

\begin{proposition}[cf. \cite{HSD2013}]\label{prop:cont}
Let $f$ be a strictly convex function. 
Then, $f$ is a Lyapunov function of the gradient flow~\eqref{eq:GF}, 
and 
\[
	\lim_{t \to \infty} x(t) = x^\star
\]
holds for any initial condition $ x_0 \in \RR^n $. 
\end{proposition}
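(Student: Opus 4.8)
The plan is to exhibit $W(x) := f(x) - f^\star$ as a strict Lyapunov function for the flow and then conclude by a standard global asymptotic stability argument. Writing the flow as $\dot{x} = -\pdmat\nabla f(x)$ (the case $\pdmat = I$ being \eqref{eq:DGI}), I would first record that the standing assumption $\argmin f \neq \emptyset$ together with strict convexity gives a \emph{unique} minimizer $x^\star$, so that $W \ge 0$ with equality iff $x = x^\star$, and that $\nabla f(x) = 0$ holds only at $x = x^\star$. Differentiating along a trajectory and using that $\pdmat$ is positive definite,
\[
\frac{\rd}{\rd t} W(x(t)) = \inprd*{\nabla f(x)}{\dot{x}} = -\inprd*{\nabla f(x)}{\pdmat\nabla f(x)} \le 0,
\]
with strict inequality whenever $\nabla f(x) \neq 0$, i.e.\ whenever $x \neq x^\star$. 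This already establishes that $f$ is a Lyapunov function and yields the dissipation $\frac{\rd}{\rd t} f(x(t)) \le 0$.

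The crux is upgrading Lyapunov stability to global convergence, for which the trajectory must be bounded; the key technical step is therefore to show that $f$ is coercive (radially unbounded), which does not follow from strict convexity alone but does once a minimizer is assumed to exist. I would prove this by a recession/ray argument: for a unit vector $d$, the map $\phi_d(t) := f(x^\star + td)$ is strictly convex with minimum at $t = 0$, so the difference quotient $t \mapsto (\phi_d(t) - f^\star)/t$ is nondecreasing on $(0,\infty)$; hence for $t \ge 1$ one has $(\phi_d(t) - f^\star)/t \ge \phi_d(1) - f^\star = f(x^\star + d) - f^\star$. Since $x^\star$ is the unique minimizer, $f(x^\star + d) - f^\star > 0$ for every $\norm{d} = 1$, and by continuity $m := \min_{\norm{d}=1}\paren*{f(x^\star + d) - f^\star} > 0$. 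Combining these gives $f(x) - f^\star \ge m\,\norm{x - x^\star}$ whenever $\norm{x - x^\star} \ge 1$, so every sublevel set of $f$ is bounded, hence compact.

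With coercivity in hand I would finish as follows. The sublevel set $\setI*{x}{f(x) \le f(x_0)}$ is compact and forward invariant by the dissipation law, and since $f$ is $L$-smooth the vector field is globally Lipschitz, so the solution exists for all $t \ge 0$ and remains in this compact set. As $W \ge 0$ is nonincreasing it converges, and the $\omega$-limit set of the bounded trajectory is nonempty, compact, and invariant; by LaSalle's invariance principle it is contained in the largest invariant subset of $\setI*{x}{\frac{\rd}{\rd t}W = 0} = \setI*{x}{\nabla f(x) = 0} = \setE*{x^\star}$. Hence the $\omega$-limit set equals $\setE*{x^\star}$ and $x(t) \to x^\star$ as $t \to \infty$, for every initial condition $x_0$. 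I expect the coercivity step to be the main obstacle, since it is the only place where the existence of a minimizer (rather than mere strict convexity) is genuinely used, and it is precisely what rules out trajectories escaping to infinity.
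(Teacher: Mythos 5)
Your proof is correct. Note that the paper itself offers no proof of this proposition --- it is stated as a known fact with the citation to \cite{HSD2013} --- so there is no in-paper argument to compare against; your write-up is essentially the standard Lyapunov/LaSalle argument that such a reference would give. The one place where you add genuine content beyond the routine computation $\frac{\rd}{\rd t}f(x(t)) = -\inprd*{\nabla f(x)}{\pdmat\nabla f(x)} \le 0$ is the coercivity step, and you are right that this is the real crux: strict convexity alone does not bound the trajectory, and your recession argument (monotone difference quotients along rays from $x^\star$, plus compactness of the unit sphere to get $f(x)-f^\star \ge m\norm{x-x^\star}$ far from $x^\star$) correctly derives coercivity from strict convexity together with the standing assumption $\argmin f \neq \emptyset$. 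Two minor points: the identification $\setI{x}{\inprd*{\nabla f(x)}{\pdmat\nabla f(x)}=0} = \setI{x}{\nabla f(x)=0}$ uses that $\pdmat$ is positive definite in the sense $\inprd*{v}{\pdmat v}>0$ for $v\neq 0$ even though $\pdmat$ need not be symmetric, which is exactly the paper's convention, so that is fine; and once the trajectory is confined to a compact sublevel set you could dispense with LaSalle entirely (if $f(x(t))$ stabilized above $f^\star$, the decay rate would be bounded away from zero on the corresponding compact annulus, a contradiction), but the invariance-principle route you chose is equally valid.
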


Based on the above fact, 
some researchers have posited the idea of using a numerical method for gradient flow as an optimization method. 
For example, 
the explicit Euler method
\begin{equation}\label{eq:exEuler}
	\frac{x_{k+1} - x_k}{h} = -\nabla f(x_k),
\end{equation}
coincides with the steepest descent method. 
However, we should carefully choose the step size $h$ to ensure the convergence of this method. 

Here, because the convergence in continuous time, such as in \cref{prop:cont}, is based on the dissipation law, 
the numerical method replicating the dissipation law can be regarded as an optimization method (see, e.g.,~\cite{ERRS2018}). 
Before stepping into the property in discrete systems, 
we review it in continuous systems in this section. 
The gradient flow~\eqref{eq:DGI} satisfies the following three theorems. 

\begin{theorem}\label{thm:gradrate}
The solution $x$ of the gradient flow~\eqref{eq:GF} satisfies
\[
	\min_{0 \leq \tau \leq t} \norm{\nabla f(x(\tau))} \leq \sqrt{\frac{f(x_0) - f^\star}{t}}.
\]
\end{theorem}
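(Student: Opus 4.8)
The plan is to convert the pointwise dissipation rate into an integral bound and then extract the minimal gradient norm. First I would differentiate $f$ along the trajectory of the flow $\dot{x} = -\nabla f(x)$ from \eqref{eq:DGI} (the specialization of \eqref{eq:GF} with $\pdmat = I$, which is exactly what makes the dissipation rate equal the squared gradient norm). By the chain rule,
\[
	\frac{\rd}{\rd t} f(x(t)) = \inprd*{\nabla f(x(t))}{\dot{x}(t)} = -\norm*{\nabla f(x(t))}^2.
\]
This is the continuous-time analogue of the dissipation law in \cref{thm:LMdisp}, and it is the engine of the whole estimate.

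Next I would integrate this identity over the interval $[0,t]$ via the fundamental theorem of calculus, obtaining
\[
	\int_0^t \norm*{\nabla f(x(\tau))}^2 \, \rd\tau = f(x_0) - f(x(t)).
\]
Using $f(x(t)) \geq f^\star$ on the right-hand side bounds the total dissipated gradient energy by $f(x_0) - f^\star$. Since the integrand is nonnegative and, by $L$-smoothness of $f$ together with continuity of $x$, continuous on the compact interval $[0,t]$, the minimum $\min_{0\le\tau\le t}\norm*{\nabla f(x(\tau))}^2$ is attained; bounding the integral from below by $t$ times this minimum yields
\[
	t \cdot \min_{0 \leq \tau \leq t} \norm*{\nabla f(x(\tau))}^2 \leq \int_0^t \norm*{\nabla f(x(\tau))}^2 \, \rd\tau \leq f(x_0) - f^\star.
\]
Dividing by $t$ and taking square roots gives the claimed bound.

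I do not anticipate a serious obstacle here, as the argument is a direct energy estimate; the only points requiring a word of care are the justification that the minimum is attained (continuity plus compactness, as above) and the implicit global existence of the trajectory on $[0,t]$, which follows from the $L$-smoothness assumption on $f$. The main conceptual content is simply recognizing that averaging the dissipation identity over time converts a total-decrease statement into a worst-case bound on the gradient norm, so the proof reduces to the two displayed computations above.
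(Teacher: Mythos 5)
Your proposal is correct and follows essentially the same argument as the paper: integrate the dissipation identity $\frac{\rd}{\rd t} f(x(t)) = -\norm{\nabla f(x(t))}^2$ over $[0,t]$, bound $f(x(t))$ below by $f^\star$, and lower-bound the integral by $t$ times the minimum of the integrand. The extra remarks on attainment of the minimum and global existence are fine but not needed beyond what the paper already assumes.
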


\begin{proof}
Since $ f^\star $ is an optimal value, 
\begin{align}
	f(x_0) - f^\star &\geq f(x_0) - f(x(t)) 
					= \int^0_t \inprd*{\nabla f(x(\tau))}{\dot{x}(\tau)} \rd\tau 
					= \int_0^t \norm*{\nabla f(x(\tau))}^2 \rd\tau \\
					&\geq t\min_{0\leq\tau\leq t}\norm*{\nabla f(x(\tau))}^2
\end{align}
holds. 
\end{proof}

\begin{theorem}\label{thm:convrate}
If $f$ is convex, the solution $x$ of the gradient flow~\eqref{eq:DGI} satisfies
\[
	f(x) - f^\star \leq \frac{\norm*{x_0 - x^\star}^2}{2t}.
\]
\end{theorem}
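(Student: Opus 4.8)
The plan is to track the squared distance $\norm*{x(t) - x^\star}^2$ to a minimizer along the flow and to exploit convexity, in the same spirit as the proof of \cref{thm:gradrate} but monitoring the distance rather than the function value.

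First I would differentiate along $\dot{x} = -\nabla f(x)$ to obtain
\[
\frac{\rd}{\rd t} \norm*{x(t) - x^\star}^2 = 2\inprd*{x(t) - x^\star}{\dot{x}(t)} = -2\inprd*{\nabla f(x(t))}{x(t) - x^\star}.
\]
Convexity of $f$ yields the gradient inequality $f^\star = f(x^\star) \geq f(x(t)) + \inprd*{\nabla f(x(t))}{x^\star - x(t)}$, equivalently $\inprd*{\nabla f(x(t))}{x(t) - x^\star} \geq f(x(t)) - f^\star$. Substituting gives the differential inequality
\[
\frac{\rd}{\rd t}\norm*{x(t) - x^\star}^2 \leq -2\paren*{f(x(t)) - f^\star},
\]
which I would integrate from $0$ to $t$ and then discard the nonnegative term $\norm*{x(t)-x^\star}^2$ to reach
\[
2\int_0^t \paren*{f(x(\tau)) - f^\star}\rd\tau \leq \norm*{x_0 - x^\star}^2.
\]

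The last step, and the only one needing care, is to convert this time-averaged bound into the pointwise bound in the statement. Here I would invoke the dissipation law: since $\frac{\rd}{\rd t} f(x(t)) = -\norm*{\nabla f(x(t))}^2 \le 0$, the map $\tau \mapsto f(x(\tau))$ is nonincreasing, so $\int_0^t \paren*{f(x(\tau)) - f^\star}\rd\tau \geq t\paren*{f(x(t)) - f^\star}$. Combining the two inequalities gives $2t\paren*{f(x(t)) - f^\star} \leq \norm*{x_0 - x^\star}^2$, which is the claim. I do not anticipate a genuine obstacle, as the argument is elementary; but the monotonicity step is the crux, and it is precisely the continuous-time dissipation law that makes the passage from an averaged to a pointwise rate possible. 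This mirrors the role the discrete dissipation law (\cref{thm:LMdisp}) will play when the analogous estimate is established for the numerical scheme.
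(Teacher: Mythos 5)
Your proof is correct, but it is organized differently from the paper's. The paper introduces the single Lyapunov function $\cE(t) := t\paren*{f(x)-f^\star} + \frac{1}{2}\norm*{x-x^\star}^2$ and shows $\dot{\cE}(t) \le -t\norm*{\nabla f(x)}^2 \le 0$ in one differentiation, after which $t\paren*{f(x)-f^\star} \le \cE(t) \le \cE(0)$ gives the claim; the convexity inequality and the dissipation law are both absorbed into that one computation. You instead split the argument in two: first the averaged (ergodic) bound $2\int_0^t \paren*{f(x(\tau))-f^\star}\,\rd\tau \le \norm*{x_0-x^\star}^2$ obtained from convexity alone, and then the upgrade to a pointwise bound via monotonicity of $\tau \mapsto f(x(\tau))$, which is exactly the dissipation law. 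Both routes use the same two ingredients, and your version has the pedagogical merit of isolating where the dissipation law enters --- which matches the paper's emphasis that dissipativity is what the numerical scheme must replicate. The Lyapunov-function formulation, on the other hand, is the one that transfers more directly to the discrete setting (compare $\cE_k$ in the proof of \cref{thm:LMrate2}, which mirrors $\cE$ term by term) and to situations, such as accelerated flows, where monotone decrease of $f$ along trajectories is not available. One small caveat: your final step requires $t>0$ and silently uses that the trivial bound suffices at $t=0$, but this is immaterial since the stated inequality is only meaningful for $t>0$.
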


\begin{proof}
Let $ \cE $ be a function defined by  
$
	\cE(t) := t\paren*{f(x) - f^\star} + \frac{1}{2}\norm*{x - x^\star}^2.
$
Then, $ \cE (t) $ decreases along time:
\begin{align}
	\dot{\cE}(t) &= f(x) - f^\star + t\inprd*{\nabla f(x)}{\dot{x}} + \inprd*{x - x^\star}{\dot{x}} \\ 
				&= f(x) - f^\star + \inprd*{x^\star - x}{\nabla f(x)} - t\norm*{\nabla f(x)}^2 \\
				&\leq -t\norm*{\nabla f(x)}^2, 
\end{align}
where the last inequality is due to convexity. 
Therefore, 
\[
	t\paren*{f(x) - f^\star} \le \cE(t) \le \cE(0) = \frac{1}{2}\norm{x_0 - x^\star}^2
\]
holds, which proves the theorem. 
\end{proof}

\begin{theorem}\label{thm:sconvrate}
If $f$ satisfies the Polyak--{\L}ojasiewicz inequality~\eqref{eq:PL} with parameter $\mu > 0$, 
the solution $x$ of the gradient flow~\eqref{eq:DGI} satisfies
\[
	f(x) - f^\star \leq \exp\paren*{-2\mu t}\paren*{f(x_0) - f^\star}.
\]
\end{theorem}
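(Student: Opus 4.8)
The plan is to mimic the Lyapunov/monotone-energy strategy used in the proofs of \cref{thm:gradrate,thm:convrate}, but now applied directly to the energy $g(t) := f(x(t)) - f^\star$, and to close the argument with an integrating factor (i.e., a Gr\"onwall-type step) rather than with a monotonicity statement alone.

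First I would differentiate $g$ along the flow. By the chain rule together with the gradient flow equation $\dot{x} = -\nabla f(x)$ from~\eqref{eq:DGI}, one obtains
\[
	\dot{g}(t) = \inprd*{\nabla f(x)}{\dot{x}} = -\norm*{\nabla f(x)}^2.
\]
Next I would invoke the Polyak--{\L}ojasiewicz inequality~\eqref{eq:PL}, which rearranges to $\norm*{\nabla f(x)}^2 \ge 2\mu\paren*{f(x) - f^\star} = 2\mu\, g(t)$. Substituting this into the expression for $\dot{g}$ yields the scalar differential inequality
\[
	\dot{g}(t) \le -2\mu\, g(t).
\]

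Finally I would integrate this inequality via the integrating factor $\exp\paren*{2\mu t}$: since $\frac{\rd}{\rd t}\paren*{\exp\paren*{2\mu t} g(t)} = \exp\paren*{2\mu t}\paren*{\dot{g}(t) + 2\mu\, g(t)} \le 0$, the quantity $\exp\paren*{2\mu t} g(t)$ is non-increasing. Comparing its values at times $0$ and $t$ gives $\exp\paren*{2\mu t} g(t) \le g(0)$, that is, $f(x(t)) - f^\star \le \exp\paren*{-2\mu t}\paren*{f(x_0) - f^\star}$, which is exactly the asserted bound.

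I do not anticipate any serious obstacle: the argument is entirely parallel to the preceding two theorems, and the lone point requiring care is the correct sign handling in the integrating-factor step, which plays here the role that the auxiliary function $\cE(t)$ plays in the proof of \cref{thm:convrate}. One need not separately verify that $g(t) \ge 0$, since the Gr\"onwall step is valid regardless of sign; it is nonetheless worth noting that $g(t) \ge 0$ follows from the definition of $f^\star$.
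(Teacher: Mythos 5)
Your argument is correct and is essentially identical to the paper's proof, which also sets $\cL(t) := f(x(t)) - f^\star$, derives $\dot{\cL}(t) = -\norm{\nabla f(x)}^2 \le -2\mu\,\cL(t)$ from the chain rule and the P{\L} inequality, and concludes by the same Gr\"onwall step (left implicit there, spelled out via the integrating factor in your write-up). No differences worth noting.
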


\begin{proof}
Let $ \cL $ be a function defined by $ \cL(t) := f(x) - f^\star $. 
Then, 
\[ \dot{\cL} (t) = \inprd{\nabla f(x)}{\dot{x}} = -\norm{\nabla f(x)}^2 \le - 2 \mu \paren*{f(x) - f^\star} = - 2\mu\cL(t); \]
therefore, $\cL(t) \leq \exp\paren*{-2\mu t}\cL(0) $ holds, which proves the theorem. 
\end{proof}

Ehrhardt, Riis, Ringholm, and Sch\"onlieb~\cite{ERRS2018} showed that 
the discrete gradient method with several known constructions of the discrete gradient satisfies $ f(x_k) - f^\star = O\paren*{1/k} $ for convex functions, and $ f(x_k) - f^\star = O\paren*{\exp(-Ck)} $ for functions satisfying P{\L} inequality~\eqref{eq:PL} ($ C > 0 $ is a constant).

\section{Existence theorems}
\label{sec:existence}

In this section, we establish existence theorems for the Lagrange multiplier method~\eqref{eq:LM} under the assumption $x_{k+1/2}^{\ast} = x_k$. 
First, we establish an existence result for general splitting in \cref{existence:general}. 
Then, we restrict ourselves to a special case and obtain existence results with bounds on the solution $ \eta_k $. 

\subsection{Existence results in general setting}
\label{existence:general}

In this section, we prove the existence of the solution $ \eta $ of the nonlinear equation $ F_h (\eta; x_k) = 0 $ for sufficiently small $h$ by using the intermediate value theorem. 
For this purpose, we first prove that $ F_h (\eta; x_k) > 0 $ holds for sufficiently large $\eta $ (\cref{lem:ex_gen_pos}). 
Then, we prove that there exists $ \eta $ satisfying $ F_h (\eta; x_k) < 0 $ when 
$ \inprd*{ \nabla E \paren*{x_k} }{ \pdmat \nabla V \paren*{x_k} }  \neq 0 $ (\cref{lem:ex_gen_neg}). 
These lemmas imply the desired existence theorem (\cref{thm:ex_gen}) on the case $ \inprd*{ \nabla E \paren*{x_k} }{ \pdmat \nabla V \paren*{x_k} } \neq 0 $. 
In addition, even in the case $ \inprd*{ \nabla E \paren*{x_k} }{ \pdmat \nabla V \paren*{x_k} } = 0 $, by introducing a small perturbation to the splitting, we can return to the case $ \inprd*{ \nabla E \paren*{x_k} }{ \pdmat \nabla V \paren*{x_k} }  \neq 0 $. 

Let us denote the minimum value of the Rayleigh quotient of matrix $A$ by $ \rqmin (A)$. 
Note that, if matrix $A$ is symmetric, $ \rqmin (A) $ coincides with the minimum eigenvalues of $A$. 
Moreover, since the matrix $ \pdmat $ is positive definite, 
$ \rqmin (\pdmat) \ge 0 $ and $ \rqmin \paren*{ \pdmat^{-1} } \ge 0 $ hold. 

\begin{lemma}\label{lem:ex_gen_pos}
Let $ E : \RR^n \to \RR $ be an $L_E$-smooth function. 
Suppose that $ \nabla E (x_k) \neq 0 $ holds and 
$h$ satisfies $ h \paren*{ \rqmin (Q) - L_E } > - 2 \rqmin \paren*{ \pdmat^{-1} }$. 
Then, there exits $ \overline{\eta} \in \RR $ such that $ F_h (\overline{\eta }; x_k) > 0 $ holds.
\end{lemma}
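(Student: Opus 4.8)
The plan is to show that $F_h(\eta; x_k)$ grows positive as $\eta$ becomes large by exploiting the quadratic structure hidden in the definition. Recall that
\[
F_h(\eta; x_k) = E(p_k - h\eta q_k) - E(x_k) - \eta\inprd*{\nabla E(x_k)}{p_k - h\eta q_k - x_k}.
\]
The key observation is that the last term contains a factor of $\eta$ multiplying a displacement that itself depends on $\eta$ through $-h\eta q_k$. So the inner product expands as $\eta\inprd*{\nabla E(x_k)}{p_k - x_k} - h\eta^2\inprd*{\nabla E(x_k)}{q_k}$, producing a term quadratic in $\eta$ with coefficient $+h\inprd*{\nabla E(x_k)}{q_k}$. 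First I would isolate this $\eta^2$ term, since if its coefficient is positive and dominates the remaining growth of $E(p_k - h\eta q_k)$, then $F_h \to +\infty$.

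**Bounding the smooth part.**

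The term $E(p_k - h\eta q_k)$ could, a priori, grow or decay; to control it I would apply the $L_E$-smoothness bound from \cref{prop:ltan} (applied to $E$), which gives
\[
E(p_k - h\eta q_k) - E(x_k) \ge \inprd*{\nabla E(x_k)}{p_k - h\eta q_k - x_k} - \frac{L_E}{2}\norm*{p_k - h\eta q_k - x_k}^2.
\]
Substituting this lower bound into $F_h$, the linear-in-$E$ inner product terms combine with the explicit subtracted inner product. The remaining obstruction is the quadratic penalty $-\frac{L_E}{2}\norm*{p_k - h\eta q_k - x_k}^2$, whose leading $\eta^2$ coefficient is $-\frac{L_E}{2}h^2\norm*{q_k}^2$. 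So after collecting, the coefficient of $\eta^2$ in the lower bound for $F_h$ is roughly $h\inprd*{\nabla E(x_k)}{q_k} - \frac{L_E}{2}h^2\norm*{q_k}^2$ (up to the cross terms with $p_k - x_k$). The plan hinges on showing this net coefficient is positive.

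**Connecting $q_k$ to the Rayleigh quotient hypothesis.**

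The hard part will be translating the hypothesis $h(\rqmin(Q) - L_E) > -2\rqmin(\pdmat^{-1})$ into positivity of the $\eta^2$ coefficient. Here I would unpack $q_k = (I + \frac{h}{2}\pdmat Q)^{-1}\pdmat\nabla E(x_k)$. The inner product $\inprd*{\nabla E(x_k)}{q_k}$ is then $\inprd*{\nabla E(x_k)}{(I + \frac{h}{2}\pdmat Q)^{-1}\pdmat\nabla E(x_k)}$, and I would relate this to a Rayleigh quotient by writing the relevant quadratic form and bounding it below using $\rqmin(\pdmat^{-1})$ and $\rqmin(Q)$. The guiding identity is that the net coefficient should factor so that positivity is exactly equivalent to $h(\rqmin(Q) - L_E) > -2\rqmin(\pdmat^{-1})$; the nonsymmetry of $\pdmat$ is why Rayleigh quotients (rather than eigenvalues) appear, and handling the symmetric/antisymmetric split of $\pdmat^{-1}$ and $Q$ carefully is where the estimate must be done delicately. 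The assumption $\nabla E(x_k) \neq 0$ ensures $q_k \neq 0$, so the quadratic coefficient is strictly positive and not merely nonnegative.

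**Conclusion.**

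Once the $\eta^2$ coefficient is shown strictly positive, the lower bound for $F_h(\eta; x_k)$ is a quadratic in $\eta$ with positive leading coefficient, hence tends to $+\infty$. Therefore some finite $\overline{\eta}$ achieves $F_h(\overline{\eta}; x_k) > 0$, completing the proof. I expect the bulk of the technical effort to lie in the Rayleigh-quotient estimate of the middle paragraph, where the precise form of the step-size hypothesis must emerge.
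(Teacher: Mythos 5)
Your proposal follows essentially the same route as the paper's proof: lower-bound $F_h$ via the $L_E$-smoothness inequality, observe that the resulting bound is a quadratic in $\eta$, and show its leading coefficient $h\inprd*{\nabla E(x_k)}{q_k} - \tfrac{L_E h^2}{2}\norm*{q_k}^2$ is positive by rewriting $\nabla E(x_k) = \pdmat^{-1}\paren*{I + \tfrac{h}{2}\pdmat Q}q_k$ and bounding with the Rayleigh quotients $\rqmin(\pdmat^{-1})$ and $\rqmin(Q)$, which is exactly where the step-size hypothesis enters (as a sufficient condition rather than an exact equivalence, but that is immaterial). The plan is correct and matches the paper.
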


\begin{proof}
Because of the assumption on $h$, the matrix $ I + (h/2) \pdmat Q $ is invertible. Therefore, the assumption $ \nabla E \paren*{x_k} \neq 0 $ implies $ q_k \neq 0 $.

Since $E$ is $L_E$-smooth, we use \cref{prop:ltan} and obtain
\begin{align}
    F_h \paren*{ \eta; x_k }
    &\ge \paren*{ 1 - \eta } \inprd*{ \nabla E \paren*{ x_k } }{ p_k - h \eta q_k - x_k } - \frac{L_E}{2} \norm*{ p_k - h \eta q_k - x_k }^2.
\end{align}
The right-hand side is a quadratic function with respect to $\eta$, whose coefficient of the highest degree is positive:
\begin{align}
    h \inprd*{\nabla E \paren*{ x_k }}{ q_k } - \frac{L_E h^2}{2} \norm*{ q_k }^2
    &= h \inprd*{ \pdmat^{-1} \paren*{ I + \frac{h}{2} \pdmat Q } q_k }{ q_k } - \frac{L_E h^2}{2} \norm*{ q_k }^2 \\
    &\ge h \paren*{ \rqmin \paren*{\pdmat^{-1}} + \frac{h}{2} \rqmin (Q) - \frac{L_E h}{2}  } \norm*{q_k}^2 \\
    &>0.
\end{align}
Therefore, $F_h \paren*{ \eta; x_k } $ is positive for sufficiently large $ \eta$. 
\end{proof}

\begin{lemma}\label{lem:ex_gen_neg}
Suppose that $ \inprd*{ \nabla E \paren*{x_k} }{ \pdmat \nabla V \paren*{x_k} } \neq 0 $ holds. 
Then, there exist $ \underline{\eta} \in \RR $ and $ \overline{h} > 0 $ such that 
$ F_h \paren*{ \underline{\eta} ; x_k } < 0 $ holds for any $ h < \overline{h} $. 
\end{lemma}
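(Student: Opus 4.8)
The plan is to find a single value $\underline{\eta}$, depending on $x_k$ but not on $h$, at which the $L_E$-smoothness upper bound for $F_h$ is strictly negative once $h$ is small. First I would put the displacement appearing inside $F_h$ into closed form. Writing $M_h := \paren*{I + \frac{h}{2}\pdmat Q}^{-1}$ and using $x_k = M_h \paren*{I + \frac{h}{2}\pdmat Q} x_k$ together with $\nabla V(x_k) = Q x_k + \nabla E(x_k)$, a direct computation gives
\[
    p_k - h\eta q_k - x_k = -h M_h \pdmat \paren*{Q x_k + \eta \nabla E(x_k)} = -h M_h \pdmat \paren*{\nabla V(x_k) + (\eta - 1)\nabla E(x_k)}.
\]
In particular this quantity is $O(h)$ for any fixed $\eta$, and $M_h = I + O(h)$ because $\pdmat Q$ is a fixed matrix.

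Next I would apply the right-hand inequality of \cref{prop:ltan} (the $L_E$-smooth upper bound) with $y = p_k - h\eta q_k$ and $x = x_k$, which yields
\[
    F_h(\eta; x_k) \le (1 - \eta)\inprd*{\nabla E(x_k)}{p_k - h\eta q_k - x_k} + \frac{L_E}{2}\norm*{p_k - h\eta q_k - x_k}^2.
\]
Substituting the closed form and using $M_h = I + O(h)$ turns the right-hand side into $-h\,g(\eta) + O(h^2)$, where
\[
    g(\eta) := (1-\eta)\inprd*{\nabla E(x_k)}{\pdmat\paren*{\nabla V(x_k) + (\eta-1)\nabla E(x_k)}} = (1-\eta)\paren*{a + (\eta - 1)b},
\]
with $a := \inprd*{\nabla E(x_k)}{\pdmat \nabla V(x_k)}$ and $b := \inprd*{\nabla E(x_k)}{\pdmat \nabla E(x_k)}$.

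The crux is to choose $\eta$ so that $g(\eta) > 0$. Setting $s := 1 - \eta$ gives $g = s a - s^2 b$. The hypothesis $a \neq 0$ forces $\nabla E(x_k) \neq 0$, hence $b > 0$ by positive definiteness of $\pdmat$, so $g$ is a strictly concave parabola in $s$ whose maximum value $a^2/(4b) > 0$ is attained at $s = a/(2b)$. Taking $\underline{\eta} := 1 - a/(2b)$ therefore yields
\[
    F_h(\underline{\eta}; x_k) \le -h\,\frac{a^2}{4b} + O(h^2),
\]
which is strictly negative for all $h$ below a threshold $\overline{h} > 0$. I expect the main obstacle to be the choice of $\underline{\eta}$ rather than the algebra: the natural guess $\eta = 1$ (which makes the scheme consistent with the gradient flow) kills the leading $O(h)$ term, so one must deliberately perturb $\eta$ away from $1$ in the direction fixed by the sign of $a$ and recognize that maximizing the quadratic $g$ produces a strictly negative leading coefficient. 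One also has to check that the $O(h^2)$ remainder is genuinely lower order, which holds because $M_h$ stays bounded and the displacement is $O(h)$ as $h \to 0$.
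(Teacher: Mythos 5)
Your proof is correct and rests on the same first-order-in-$h$ mechanism as the paper's: both arguments expand $F_h$ to leading order in $h$ and find that the coefficient of $h$ is (up to sign) the quadratic $g(\eta) = (1-\eta)\paren*{a + (\eta-1)b}$. Indeed, the paper computes $\left.\frac{\partial}{\partial h}F_h(\eta;x_k)\right|_{h=0} = (1-\eta)\inprd*{\nabla E(x_k)}{-\pdmat Q x_k - \eta \pdmat\nabla E(x_k)}$, which equals $-g(\eta)$ after substituting $\pdmat Q x_k = \pdmat\nabla V(x_k) - \pdmat\nabla E(x_k)$; its two roots are $\eta=1$ and $\eta = 1 - a/b$, and your choice $\underline{\eta} = 1 - a/(2b)$ is exactly the vertex of that parabola, i.e.\ the midpoint of the paper's admissible interval. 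The difference is in how the remainder is handled: the paper concludes softly from $F_0(\eta;x_k)=0$ together with differentiability of $F_h$ in $h$, whereas you control the error explicitly via the upper bound in \cref{prop:ltan} and the expansion $\paren*{I+\frac{h}{2}\pdmat Q}^{-1} = I + O(h)$. One point to flag: your route invokes $L_E$-smoothness of $E$, which is not a hypothesis of the lemma as stated (the paper's proof needs only that $E$ is continuously differentiable), so strictly speaking you prove a marginally weaker statement; since \cref{thm:ex_gen} assumes $L_E$-smoothness anyway, this costs nothing downstream, and in exchange your argument produces a concrete candidate $\underline{\eta}$ and, if one tracks the constants in the $O(h^2)$ term, an explicit threshold $\overline{h}$.
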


\begin{proof}
Since
\begin{equation}
     \frac{\partial p_k}{\partial h} = -\paren*{ I + \frac{h}{2} \pdmat Q }^{-1} \pdmat Q \frac{ p_k + x_k }{2}
\end{equation}
holds, we see
\begin{align}
    \left. \frac{ \partial }{ \partial h } F_h \paren*{ \eta ; x_k } \right|_{h=0}
    &= \inprd*{ \left. \nabla E \paren*{ p_k - h \eta q_k } \right|_{h=0} - \eta \nabla E \paren*{x_k} }{ \left. \frac{\partial}{\partial h} \paren*{ p_k - h \eta q_k } \right|_{h=0} }\\
    &= \paren*{1-\eta} \inprd*{ \nabla E \paren*{x_k} }{ - \pdmat Q x_k - \eta \pdmat \nabla E \paren*{x_k} }.
\end{align}
The right-hand side is a quadratic function with respect to $ \eta $ 
such that its coefficient of the highest degree is positive. 
In addition, under the assumption of the lemma, the quadratic function has two distinct real roots. 
Therefore, for any $ \eta $ between these two real roots, 
$\left. \frac{ \partial }{ \partial h } F_h \paren*{ \eta ; x_k } \right|_{h=0} < 0 $ holds. 
This implies the lemma
because $ F_0 \paren*{\eta; x_k } = 0 $ holds and $ F_h \paren*{ \eta; x_k } $ is continuous with respect to $ h $ for sufficiently small $h$. 
\end{proof}

Combining \cref{lem:ex_gen_pos,lem:ex_gen_neg}, 
we obtain the following existence theorem because of the intermediate value theorem. 

\begin{theorem}\label{thm:ex_gen}
    Let $ E : \RR^n \to \RR $ be an $L_E$-smooth function. 
    If $ x_k $ satisfies $ \inprd*{ \nabla E \paren*{x_k} }{ \pdmat \nabla V \paren*{x_k} } \neq 0 $, 
    then, for all sufficiently small $h$, there exists a solution of the scheme~\eqref{eq:LM}. 
\end{theorem}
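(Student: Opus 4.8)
The plan is to combine the two preceding lemmas via the intermediate value theorem applied to the continuous function $\eta \mapsto F_h(\eta; x_k)$. Under the hypothesis $\inprd*{\nabla E\paren*{x_k}}{\pdmat \nabla V\paren*{x_k}} \neq 0$, I first observe that this nonvanishing forces $\nabla E\paren*{x_k} \neq 0$, since otherwise the inner product would be zero. Hence the assumption of \cref{lem:ex_gen_pos} is met, provided $h$ is also chosen small enough that $h\paren*{\rqmin(Q) - L_E} > -2\rqmin\paren*{\pdmat^{-1}}$; this latter condition holds for all sufficiently small $h>0$ because the left-hand side tends to $0$ as $h \to 0$ while the right-hand side is a fixed nonpositive constant (indeed $\rqmin\paren*{\pdmat^{-1}} \ge 0$, so the inequality is strict in the limit whenever $\rqmin\paren*{\pdmat^{-1}}>0$, and otherwise one verifies it directly for small $h$).

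Next I would invoke \cref{lem:ex_gen_neg}, whose hypothesis is exactly the standing assumption $\inprd*{\nabla E\paren*{x_k}}{\pdmat \nabla V\paren*{x_k}} \neq 0$. That lemma supplies a value $\underline{\eta} \in \RR$ and a threshold $\overline{h}>0$ such that $F_h\paren*{\underline{\eta}; x_k} < 0$ for all $h < \overline{h}$. \Cref{lem:ex_gen_pos} simultaneously supplies, for each admissible $h$, a value $\overline{\eta}$ with $F_h\paren*{\overline{\eta}; x_k} > 0$.

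Taking $h$ smaller than $\overline{h}$ and small enough to satisfy the invertibility and sign conditions of \cref{lem:ex_gen_pos}, I then have a single function $F_h(\cdot\,; x_k)$ that is continuous in $\eta$ (being built from the $L_E$-smooth, hence continuously differentiable, function $E$), negative at $\underline{\eta}$, and positive at $\overline{\eta}$. The intermediate value theorem yields an $\eta_k$ strictly between $\underline{\eta}$ and $\overline{\eta}$ with $F_h\paren*{\eta_k; x_k} = 0$. Setting $x_{k+1} := p_k - h\eta_k q_k$ then produces a solution of the scheme~\eqref{eq:LM}, since \eqref{eq:LM1} holds by the very definition of $p_k$ and $q_k$, while \eqref{eq:LM2} is equivalent to $F_h\paren*{\eta_k; x_k} = 0$ by~\eqref{eq:lm_nleq}.

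The main obstacle is not the intermediate value argument itself, which is routine, but the bookkeeping on $h$: I must confirm that a single choice of $h$ can simultaneously satisfy all the constraints imposed by the two lemmas — namely the invertibility of $I + (h/2)\pdmat Q$, the sign condition $h\paren*{\rqmin(Q) - L_E} > -2\rqmin\paren*{\pdmat^{-1}}$, and the requirement $h < \overline{h}$. Since each is an open condition containing a right-neighborhood of $h=0$, their intersection is again a nonempty right-neighborhood of $0$, so any sufficiently small $h$ works; I would state this explicitly to make the phrase ``for all sufficiently small $h$'' precise. A secondary subtlety worth a remark is that \cref{lem:ex_gen_neg} only guarantees $F_h\paren*{\underline{\eta}; x_k}<0$, not that $\underline{\eta} < \overline{\eta}$, but since $F_h$ changes sign between the two points the conclusion of the intermediate value theorem is unaffected by their relative order.
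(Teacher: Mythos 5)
Your proposal is correct and follows essentially the same route as the paper, which likewise obtains \cref{thm:ex_gen} by combining \cref{lem:ex_gen_pos,lem:ex_gen_neg} through the intermediate value theorem. Your additional bookkeeping — noting that the nonvanishing inner product forces $\nabla E(x_k)\neq 0$, that the several smallness conditions on $h$ are each open neighborhoods of $0$ and so intersect, and that the relative order of $\underline{\eta}$ and $\overline{\eta}$ is immaterial — only makes explicit what the paper leaves implicit.
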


\begin{remark}
\label{rem:ex_gen_unique}
The above argument implies that the scalar nonlinear equation $ F_h (\eta;x_k) = 0 $ has at least two solutions. 
In this sense, the usual uniqueness does not hold for the schemes. 
However, roughly speaking, the proof of \cref{lem:ex_gen_neg} implies that 
the two solutions are close to $1$ and $ - \inprd*{\nabla E (x_k)}{\pdmat Q x_k}/\inprd*{\nabla E (x_k)}{\pdmat \nabla E ( x_k) } $ for sufficiently small $h$, respectively. 
Since the solution of the continuous system~\eqref{eq:LM} is $ \eta (t) = 1 $, the solution that is closest to $1$ should be used in numerical computation. 
\end{remark}

Finally, we consider the case $ \inprd*{ \nabla E \paren*{x_k} }{ \pdmat \nabla V \paren*{x_k} } = 0 $. 
If $ \nabla V \paren*{ x_k } = 0 $ holds, $x_k$ is an equilibrium point of the system. 
Therefore, we focus on the case $ \nabla V \paren*{ x_k } \neq 0 $ hereafter. 
In this case, by introducing a small perturbation to the splitting~\eqref{eq:splitting}, we can use \cref{thm:ex_gen}. 

For an arbitrary $ \epsilon \neq 0 $, we consider the splitting 
\begin{align}\label{eq:splitting_per}
    V(x) &= \frac{1}{2}\inprd*{x}{Q_{\epsilon} x} + E_{\epsilon} (x),&
    Q_{\epsilon} &:= (1-\epsilon) Q, &
    E_{\epsilon} (x) &:= E(x) + \frac{\epsilon}{2}\inprd*{x}{Qx}. 
\end{align}
Then, we see
\begin{align}
    \inprd*{ \nabla E_{\epsilon} \paren*{x_k} }{ \pdmat \nabla V \paren*{x_k} }
    &= \inprd*{ \nabla E \paren*{x_k} + \epsilon Q x_k }{ \pdmat \nabla V \paren*{x_k} }\\
    &= \epsilon \inprd*{ Q x_k }{ \pdmat \nabla V \paren*{x_k} }\\
    &= \epsilon \inprd*{ Q x_k + \nabla E \paren*{ x_k } }{ \pdmat \nabla V \paren*{x_k} }, 
\end{align}
where the most right-hand side is nonzero
because $ D $ is positive definite and $ \nabla V(x) = Q x + \nabla E(x) $. 
Therefore, \cref{thm:ex_gen} implies that the Lagrange multiplier scheme with the splitting~\eqref{eq:splitting_per} has a solution. 
Consequently, by using the perturbed scheme only when $ \inprd*{ \nabla E \paren*{x_k} }{ \pdmat \nabla V \paren*{x_k} } = 0 $, 
we can continue to compute numerical solutions.

\subsection{Existence results in a special case}
\label{existence:special}

In this section, we further assume $ \pdmat = I $ and $ Q $ is the zero matrix. 
The results in this section can be extended to the case with general $ \pdmat $ (see \cref{app:ex}); 
however, here we focus on the simple gradient flow~\eqref{eq:DGI} 
because the existence results in this case can be utilized in optimization (see \cref{pre:gfopt,sec:cr}). 

In this case, the scheme can be written in the form
\begin{subequations}\label{eq:LMS}
\begin{align}
	\frac{x_{k+1} - x_k}{h} &= -\eta_k \nabla f(x_k), \retainlabel{eq:LMS1}\\
	f(x_{k+1}) - f(x_k) &= \eta_k \inprd*{\nabla f(x_k)}{x_{k+1} - x_k}.  \retainlabel{eq:LMS2}
\end{align}
\end{subequations}
Then, $ x_{k+1} $ can be computed by solving a scalar nonlinear equation:
\[
	F_h (\eta_k; x_k) = f\paren*{x_k - \eta_k h \nabla f(x_k)} - f(x_k) +h(\eta_k)^2\norm*{\nabla f(x_k)}^2 = 0. 
\]
This equation has a trivial solution $ \eta_k = 0 $ (cf. \cref{rem:ex_gen_unique}), 
and we prove an existence theorem below for a nontrivial solution.  

\begin{theorem}\label{thm:sol1}
Let $ f : \RR^n \to \RR $ be an $L$-smooth function satisfying $ \argmin f \neq \emptyset $. 
Then, for any $x_k \in \RR^n$, there exists an $\eta_k$ that satisfies $F_h (\eta_k; x_k) = 0$ and 
\[
	\eta_k \geq \etaLB := \paren*{1 + \frac{Lh}{2}}^{-1} > 0. 
\]
\end{theorem}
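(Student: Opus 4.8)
The plan is to apply the intermediate value theorem to the continuous map $\eta \mapsto F_h(\eta; x_k)$ on $[\etaLB, \infty)$, playing the two hypotheses off against each other: $L$-smoothness will pin $F_h$ down from above exactly at $\eta = \etaLB$, while the existence of a minimizer will force $F_h$ to blow up as $\eta \to \infty$. First I would dispose of the degenerate case $\nabla f(x_k) = 0$, in which $F_h(\eta; x_k) \equiv 0$ and the choice $\eta_k = \etaLB$ works trivially; from then on I assume $\nabla f(x_k) \neq 0$, so $\norm*{\nabla f(x_k)}^2 > 0$.

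The main step is to evaluate $F_h$ at the candidate lower bound. Applying the upper inequality of \cref{prop:ltan} to the term $f\paren*{x_k - \eta h \nabla f(x_k)}$ gives
\[
    F_h(\eta; x_k) \le -\eta h \norm*{\nabla f(x_k)}^2 + h \eta^2 \paren*{1 + \tfrac{Lh}{2}} \norm*{\nabla f(x_k)}^2 = h \eta \norm*{\nabla f(x_k)}^2 \left[ \eta \paren*{1 + \tfrac{Lh}{2}} - 1 \right].
\]
The bracketed factor vanishes precisely at $\eta = \etaLB = \paren*{1 + Lh/2}^{-1}$, so $F_h(\etaLB; x_k) \le 0$. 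This is where the specific value of $\etaLB$ in the statement comes from.

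For the other side of the interval I would use $\argmin f \neq \emptyset$ to bound $f$ below by $f^\star$, which yields
\[
    F_h(\eta; x_k) \ge f^\star - f(x_k) + h \eta^2 \norm*{\nabla f(x_k)}^2.
\]
Since $\norm*{\nabla f(x_k)}^2 > 0$ and $h > 0$, the right-hand side tends to $+\infty$ as $\eta \to \infty$, so there is some $\eta^\ast > \etaLB$ with $F_h(\eta^\ast; x_k) > 0$. Continuity of $F_h(\cdot; x_k)$ together with $F_h(\etaLB; x_k) \le 0 < F_h(\eta^\ast; x_k)$ and the intermediate value theorem then produces a root $\eta_k \in [\etaLB, \eta^\ast]$, giving the asserted bound $\eta_k \ge \etaLB$ (and if $F_h(\etaLB; x_k) = 0$ one simply takes $\eta_k = \etaLB$).

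I expect the only genuinely delicate point to be the algebraic identity that makes $\etaLB$ the exact threshold at which the smoothness upper bound changes sign; the degenerate case, the coercivity coming from lower-boundedness, and the application of the intermediate value theorem are all routine.
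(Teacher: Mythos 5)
Your proposal is correct and follows essentially the same route as the paper's proof: dispose of the case $\nabla f(x_k)=0$, use the upper inequality of \cref{prop:ltan} to show $F_h(\etaLB;x_k)\le 0$ via the factorization $F_h(\eta;x_k)\le h\eta\norm{\nabla f(x_k)}^2\bigl(\eta(1+\tfrac{Lh}{2})-1\bigr)$, use boundedness from below to get $F_h(\eta;x_k)\to\infty$ as $\eta\to\infty$, and conclude by the intermediate value theorem. No gaps.
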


\begin{proof}
In this proof, we use the notation $d_k := -\nabla f(x_k)$ for brevity. 
If $ d_k = 0 $, $F_h(\eta_k; x_k) = 0$ holds for any $\eta_k \in \RR$ so that the theorem holds. 
Therefore, we focus on the case $ d_k \neq 0 $ hereafter. 
Then, because $ \argmin f \neq \emptyset $, $f$ is bounded from below so that $ \lim_{\eta \to \infty} F_h ( \eta ; x_k ) = \infty $ holds. 

Because we assume that $f$ is $L$-smooth, the second inequality of \cref{prop:ltan} implies
\begin{align}
	F_h (\eta_k; x_k) &\leq \inprd*{\nabla f(x_k)}{\eta_khd_k} + \frac{L}{2}\norm{\eta_khd_k}^2 + h(\eta_k)^2\norm*{d_k}^2 \notag \\
	&= \eta_kh\norm*{d_k}^2 \paren*{\eta_k\paren*{1 + \frac{Lh}{2}} - 1}.  \label{eq:Fbound}
\end{align}
Therefore, $F_h \paren*{\etaLB; x_k} \le 0$ holds, 
which proves the theorem due to the intermediate value theorem. 
\end{proof}

The theorem above gives the lower bound of the nontrivial solution, 
and the following theorem gives the upper bound for a sufficiently small step size $h$. 

\begin{theorem}\label{thm:ub}
Let $ f : \RR^n \to \RR $ be an $L$-smooth function satisfying $ \argmin f \neq \emptyset $. 
Assume that $\nabla f(x_k) \neq 0$ and $h \le {2}/{L}$ hold. 
If $\eta_k > 0$ satisfies $F_h(\eta_k; x_k) = 0$, then
\[
	\etaLB \leq \eta_k \leq \paren*{1 - \frac{Lh}{2}}^{-1}
\]
holds. 
\end{theorem}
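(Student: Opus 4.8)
The plan is to bracket any positive root $\eta_k$ of $F_h(\cdot\,; x_k)$ between two quadratic estimates coming from the two sides of \cref{prop:ltan}, mirroring the argument already used for \cref{thm:sol1}. Throughout I write $d_k := -\nabla f(x_k)$, and note that the hypothesis $\nabla f(x_k) \neq 0$ gives $\norm{d_k}^2 > 0$.

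For the lower bound I would simply reuse the upper estimate on $F_h$ derived in the proof of \cref{thm:sol1}, namely
\begin{equation}
  F_h(\eta_k; x_k) \le \eta_k h \norm{d_k}^2 \paren*{\eta_k \paren*{1 + \frac{Lh}{2}} - 1}.
\end{equation}
For $0 < \eta_k < \etaLB = \paren*{1 + Lh/2}^{-1}$ the bracketed factor is strictly negative while $\eta_k h \norm{d_k}^2 > 0$, so $F_h(\eta_k; x_k) < 0$. Hence no root can lie in the open interval $(0, \etaLB)$, which yields $\eta_k \ge \etaLB$ for every positive solution. (Note that this upgrades \cref{thm:sol1}, which only asserted the \emph{existence} of a solution at least $\etaLB$, to a statement about \emph{all} positive solutions.)

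The upper bound is the genuinely new content, and I would obtain it by playing the same game with the other inequality in \cref{prop:ltan}. Taking $y = x_k + \eta_k h d_k$ and applying the left inequality $-\frac{L}{2}\norm{y - x_k}^2 \le f(y) - f(x_k) - \inprd*{\nabla f(x_k)}{y - x_k}$, together with the identity $\inprd*{\nabla f(x_k)}{d_k} = -\norm{d_k}^2$, produces the quadratic lower bound
\begin{equation}
  F_h(\eta_k; x_k) \ge \eta_k h \norm{d_k}^2 \paren*{\eta_k \paren*{1 - \frac{Lh}{2}} - 1}.
\end{equation}
Evaluating at a positive root, where the left-hand side vanishes, and dividing by the strictly positive quantity $\eta_k h \norm{d_k}^2$ forces $\eta_k \paren*{1 - Lh/2} \le 1$.

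The only delicate point is the sign of the factor $1 - Lh/2$, and this is precisely where the hypothesis $h \le 2/L$ enters. When $h < 2/L$ the factor is strictly positive, so dividing through gives the claimed $\eta_k \le \paren*{1 - Lh/2}^{-1}$; when $h = 2/L$ the factor vanishes, the inequality $\eta_k \paren*{1 - Lh/2} \le 1$ degenerates to $0 \le 1$ and imposes no constraint, while the stated bound $\paren*{1 - Lh/2}^{-1}$ is $+\infty$ and is vacuously satisfied. I do not anticipate any real obstacle here: the computation is entirely symmetric to the lower-bound estimate, and the only care needed is book-keeping the sign of $1 - Lh/2$ and treating the boundary case $h = 2/L$ separately.
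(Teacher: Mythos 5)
Your proposal is correct and follows essentially the same route as the paper: the lower bound reuses the estimate \eqref{eq:Fbound} to exclude roots in $(0,\etaLB)$, and the upper bound applies the first inequality of \cref{prop:ltan} to obtain $F_h(\eta_k;x_k) \ge \eta_k h\norm{d_k}^2\paren*{\eta_k\paren*{1-\tfrac{Lh}{2}}-1}$ and then evaluates at a root. Your explicit handling of the boundary case $h = 2/L$ is a minor (and welcome) addition the paper leaves implicit.
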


\begin{proof}
From \eqref{eq:Fbound}, $F_h (\eta_k; x_k) < 0$ holds for any $ \eta_k \in \paren*{ 0,\etaLB }$. 
Then, by using the first inequality in \cref{prop:ltan}, we see
\[
	F_h (\eta_k; x_k) \geq \eta_kh\norm*{d_k}^2 \paren*{\eta_k\paren*{1 - \frac{Lh}{2}} - 1}	
\]
(the proof is similar to the proof of \cref{thm:sol1}). 
Therefore, $F_h (\eta_k; x_k) > 0$ holds for any $\eta_k > \paren*{1 - \frac{Lh}{2}}^{-1}$, which proves the theorem. 
\end{proof}

Moreover, if $f$ is convex or satisfies the P{\L} inequality~\eqref{eq:PL}, there is an upper bound that is valid for any step size $h$. 

\begin{theorem}\label{thm:ubconv}
Let $ f : \RR^n \to \RR $ be an $L$-smooth function satisfying $ \argmin f \neq \emptyset $. 
If $f$ is convex and $\nabla f(x_k) \neq 0$ holds, 
then there exists a unique nontrivial solution $ \eta_k $ of the nonlinear equation $F_h(\eta_k; x_k) = 0$ such that $ \etaLB \le \eta_k \le 1 $ holds. 
\end{theorem}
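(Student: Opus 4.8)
The plan is to regard $F_h(\,\cdot\,;x_k)$ as a function of the single scalar variable $\eta$ and to use the convexity of $f$ twice: once in its plain form to trap a root in $[\etaLB,1]$, and once in a strict form to show that this nontrivial root is the only one. Throughout I assume $\nabla f(x_k)\neq 0$, so $\norm*{\nabla f(x_k)}^2>0$, and I recall that in the present special case
\[
	F_h(\eta;x_k) = f\paren*{x_k - \eta h \nabla f(x_k)} - f(x_k) + h\eta^2\norm*{\nabla f(x_k)}^2,
\]
with $F_h(0;x_k)=0$ the trivial root.

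For existence I would first invoke the bound \eqref{eq:Fbound} from the proof of \cref{thm:sol1}, which already gives $F_h(\etaLB;x_k)\le 0$. At the right endpoint, the tangent-line inequality for the convex function $f$ yields $f\paren*{x_k - h\nabla f(x_k)}\ge f(x_k) - h\norm*{\nabla f(x_k)}^2$, hence
\[
	F_h(1;x_k) = f\paren*{x_k - h\nabla f(x_k)} - f(x_k) + h\norm*{\nabla f(x_k)}^2 \ge 0.
\]
Since $\etaLB=\paren*{1+Lh/2}^{-1}\le 1$, the intermediate value theorem produces a root $\eta_k\in[\etaLB,1]$.

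For uniqueness I would prove that $\eta\mapsto F_h(\eta;x_k)$ is \emph{strictly} convex. Indeed, $\eta\mapsto f\paren*{x_k-\eta h\nabla f(x_k)}$ is convex as the composition of the convex $f$ with an affine map, while the penalty $h\eta^2\norm*{\nabla f(x_k)}^2$ is strictly convex; their sum is therefore strictly convex. A direct computation gives $\left.\frac{\partial}{\partial\eta}F_h(\eta;x_k)\right|_{\eta=0} = -h\norm*{\nabla f(x_k)}^2<0$. A strictly convex function vanishing at $\eta=0$ with strictly negative derivative there is strictly decreasing up to its unique minimizer $\eta^\ast>0$; consequently it is positive on $(-\infty,0)$, strictly negative on $(0,\eta^\ast]$, and — being coercive, since $f$ is bounded below — crosses zero exactly once more, at some $\eta_k>0$. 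This excludes negative roots and leaves a unique nontrivial root. Reading off this sign pattern, $F_h(\etaLB;x_k)\le 0$ forces $\etaLB\le\eta_k$ and $F_h(1;x_k)\ge 0$ forces $\eta_k\le 1$, so the unique nontrivial root coincides with the one found above and lies in $[\etaLB,1]$.

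I expect the only genuine subtlety to be the uniqueness step. The existence half merely reruns \cref{thm:sol1} together with the elementary bound $F_h(1;x_k)\ge 0$; by contrast, uniqueness rests on the key observation that the added quadratic penalty upgrades the convexity of $F_h(\,\cdot\,;x_k)$ to strict convexity, and on a careful reading of the resulting sign pattern to rule out a second positive root as well as any negative root. Once strict convexity is secured, the two-sided bound $\etaLB\le\eta_k\le 1$ is immediate from the monotonicity of $F_h(\,\cdot\,;x_k)$ on each side of its minimizer.
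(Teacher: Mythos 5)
Your proposal is correct and follows essentially the same route as the paper: $F_h(\etaLB;x_k)\le 0$ from the bound \eqref{eq:Fbound}, $F_h(1;x_k)\ge 0$ from the convexity tangent inequality, the intermediate value theorem for existence in $[\etaLB,1]$, and strict convexity of $\eta\mapsto F_h(\eta;x_k)$ (convex composition with affine plus a strictly convex quadratic) for uniqueness of the nontrivial root. Your sign-pattern discussion simply spells out in more detail what the paper leaves implicit, namely that a strictly convex function with the trivial zero at $\eta=0$ admits at most one further zero.
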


\begin{proof}
The convexity of $f$ implies that $ F_h (\eta_k ; x_k ) $ is strictly convex with respect to $ \eta_k $ such that the nontrivial solution is unique. 

Since $f$ is convex, we see that
\begin{align}
	F_h (1; x_k) = f(x_k + hd_k) - f(x_k) + h\norm*{d_k}^2 
			  \ge \inprd*{\nabla f(x_k)}{hd_k} + h\norm*{d_k}^2 
			  = 0,
\end{align}
which proves the theorem owing to the intermediate value theorem. 
\end{proof}

\begin{theorem}\label{thm:ubPL}
Let $ f : \RR^n \to \RR $ be an $L$-smooth function satisfying $ \argmin f \neq \emptyset $. 
If $f$ satisfies the P{\L} inequality~\eqref{eq:PL} with parameter $ \mu > 0 $ and $\nabla f(x_k) \neq 0$ holds, 
there exists a nontrivial solution $ \eta_k $ of the nonlinear equation $F_h (\eta_k; x_k) = 0$ such that
	$\etaLB \leq \eta_k \leq \paren*{ 2 \mu h }^{-\frac{1}{2}}$
holds. 
\end{theorem}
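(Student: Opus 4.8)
The plan is to apply the intermediate value theorem on the interval $\bigl[\etaLB, \paren*{2\mu h}^{-1/2}\bigr]$. From the computation in the proof of \cref{thm:sol1} we already know that $F_h\paren*{\etaLB; x_k} \le 0$ (this used only $\nabla f(x_k) \neq 0$ and $L$-smoothness, both available here), so two things remain: to show that $F_h$ is nonnegative at the upper endpoint, and to check that the two endpoints are correctly ordered so that the interval is nonempty.

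First I would evaluate $F_h$ at $\eta = \paren*{2\mu h}^{-1/2}$. Since $\argmin f \neq \emptyset$, the point $x_k - \eta h \nabla f(x_k)$ has function value at least $f^\star$, so dropping that term gives
\[
F_h(\eta; x_k) \ge f^\star - f(x_k) + h \eta^2 \norm*{\nabla f(x_k)}^2.
\]
Rewriting the P{\L} inequality~\eqref{eq:PL} as $f(x_k) - f^\star \le \frac{1}{2\mu}\norm*{\nabla f(x_k)}^2$ then yields
\[
F_h(\eta; x_k) \ge \norm*{\nabla f(x_k)}^2 \paren*{h \eta^2 - \frac{1}{2\mu}},
\]
and the right-hand side is nonnegative precisely when $\eta \ge \paren*{2\mu h}^{-1/2}$. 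In particular $F_h\paren*{\paren*{2\mu h}^{-1/2}; x_k} \ge 0$.

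Next I would verify $\etaLB \le \paren*{2\mu h}^{-1/2}$. Squaring, this is equivalent to $2\mu h \le \paren*{1 + Lh/2}^2$, that is,
\[
\tfrac{L^2}{4} h^2 + (L - 2\mu) h + 1 \ge 0.
\]
Viewing the left-hand side as a quadratic in $h$, its discriminant equals $(L-2\mu)^2 - L^2 = 4\mu(\mu - L)$, which is nonpositive because $L \ge \mu$ holds for functions that are $L$-smooth and satisfy the P{\L} inequality. Hence the quadratic is nonnegative for every $h$, and the endpoints are ordered as required.

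With $F_h\paren*{\etaLB; x_k} \le 0 \le F_h\paren*{\paren*{2\mu h}^{-1/2}; x_k}$ and the continuity of $\eta \mapsto F_h(\eta; x_k)$, the intermediate value theorem furnishes a nontrivial solution in $\bigl[\etaLB, \paren*{2\mu h}^{-1/2}\bigr]$, which is the assertion. I expect the only step needing genuine care to be the ordering of the endpoints: unlike the other bounds in this subsection it does not follow by inspection, and it is exactly here that the relation $L \ge \mu$ between the smoothness and P{\L} parameters is used. The remaining steps are a direct reprise of the intermediate-value arguments already employed in \cref{thm:sol1,thm:ub}.
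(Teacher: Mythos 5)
Your proof is correct and takes essentially the same route as the paper: both evaluate $F_h$ at $\overline{\eta}=\paren*{2\mu h}^{-1/2}$, combine $f \ge f^\star$ with the P{\L} inequality to get $F_h(\overline{\eta};x_k)\ge 0$, and apply the intermediate value theorem against $F_h(\etaLB;x_k)\le 0$ from \cref{thm:sol1}. Your explicit verification that $\etaLB \le \paren*{2\mu h}^{-1/2}$ via $L \ge \mu$ is a detail the paper leaves implicit and is a worthwhile addition.
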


\begin{proof}
By introducing $ \overline{\eta} = \paren*{ 2 \mu h }^{-\frac{1}{2}} $ and $ \overline{x} = x_k + \overline{\eta} h d_k $, we obtain
\begin{align}
    F_h (\overline{\eta}; x_k ) 
    = f(\overline{x}) - f(x_k) + \frac{1}{2 \mu } \norm*{ \nabla f (x_k
    ) }^2 
    \ge f(\overline{x}) - f(x_k) + \paren*{ f(x_k) - f^\star } 
    \ge 0,
\end{align}
which proves the theorem. 
\end{proof}


\section{Convergence rates of the Lagrange multiplier method as an optimization method}
\label{sec:cr}

The scheme~\eqref{eq:LMS} described in the previous section can be interpreted as the steepest descent method with a new step-size criterion~\eqref{eq:LMS2}. 
In this section, we show the convergence rates corresponding to \cref{thm:gradrate,thm:convrate,thm:sconvrate}. 
Hereafter, we assume $ f $ is $L$-smooth and $ 
\argmin f \neq \emptyset$ holds. 

First, we establish the discrete counterpart of \cref{thm:gradrate} as follows. 

\begin{theorem}\label{thm:LMrate1}
Let $\setE*{x_k}_{k=0}^\infty$ be a sequence satisfying \eqref{eq:LMS}, and $ \eta_k \neq 0 $ for any non-negative integer $k$. 
Then, the following inequalities hold:
\[
	\sum_{k=0}^{\infty} \norm{\nabla f(x_k)}^2 \leq \paren*{\frac{Lh}{2} + 1}^2\frac{f(x_0) - f^\star}{h},
\]
\[
	\min_{0 \leq i \leq k}\norm{\nabla f(x_i)} \leq \paren*{\frac{Lh}{2} + 1}\sqrt{\frac{f(x_0) - f^\star}{(k+1)h}}.
\] 
\end{theorem}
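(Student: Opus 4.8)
The plan is to exploit the key step-size criterion \eqref{eq:LMS2} together with the dynamics \eqref{eq:LMS1} to extract a telescoping bound on $\sum_k \norm{\nabla f(x_k)}^2$. First I would substitute \eqref{eq:LMS1} into \eqref{eq:LMS2} to turn the update rule into a clean per-step energy identity. Since $x_{k+1}-x_k = -\eta_k h \nabla f(x_k)$, the right-hand side of \eqref{eq:LMS2} becomes $\eta_k \inprd*{\nabla f(x_k)}{-\eta_k h \nabla f(x_k)} = -(\eta_k)^2 h \norm{\nabla f(x_k)}^2$. Hence the criterion reads
\[
	f(x_k) - f(x_{k+1}) = (\eta_k)^2 h \norm{\nabla f(x_k)}^2,
\]
which is the exact discrete analogue of the integrand $\norm{\nabla f(x(\tau))}^2$ appearing in the proof of \cref{thm:gradrate}. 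This identity makes the monotone decrease explicit and ready to telescope.

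Next I would sum this identity over $k = 0, 1, \dots, K$ and let $K \to \infty$, obtaining a telescoping sum on the left,
\[
	\sum_{k=0}^{\infty} (\eta_k)^2 h \norm{\nabla f(x_k)}^2 = f(x_0) - \lim_{K\to\infty} f(x_{K+1}) \le f(x_0) - f^\star,
\]
where the final inequality uses that $f^\star$ is a lower bound (from $\argmin f \neq \emptyset$) and that the partial sums are monotone, so the limit exists. To convert this into the stated bound on $\sum_k \norm{\nabla f(x_k)}^2$ without the $\eta_k$ weights, I would invoke the lower bound on the multiplier from \cref{thm:sol1}, namely $\eta_k \ge \etaLB = \paren*{1 + Lh/2}^{-1} > 0$. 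Squaring gives $(\eta_k)^2 \ge \paren*{1 + Lh/2}^{-2}$, so that
\[
	\paren*{1 + \frac{Lh}{2}}^{-2} h \sum_{k=0}^{\infty} \norm{\nabla f(x_k)}^2 \le f(x_0) - f^\star,
\]
and rearranging yields the first claimed inequality directly.

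For the second inequality, I would derive the standard consequence that the minimum of a finite collection is bounded by the average. From the first inequality, truncating the sum to the first $k+1$ terms gives $\sum_{i=0}^{k} \norm{\nabla f(x_i)}^2 \le \paren*{Lh/2 + 1}^2 (f(x_0) - f^\star)/h$, and since the minimum over $0 \le i \le k$ satisfies $(k+1)\min_{0\le i \le k}\norm{\nabla f(x_i)}^2 \le \sum_{i=0}^{k}\norm{\nabla f(x_i)}^2$, dividing by $k+1$ and taking square roots produces the stated rate. The argument is essentially routine once the energy identity is in place; the only point requiring care is justifying the existence of $\lim_{K\to\infty} f(x_{K+1})$ and the lower-boundedness used to pass to the infinite sum, which follows because the sequence $f(x_k)$ is nonincreasing (by \cref{thm:LMdisp}, or directly from the nonnegativity of each telescoped term) and bounded below by $f^\star$. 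The main conceptual step, rather than an obstacle, is recognizing that the criterion \eqref{eq:LMS2} is precisely engineered to reproduce the continuous dissipation identity, so that the discrete proof mirrors \cref{thm:gradrate} line for line with the extra factor $\paren*{1 + Lh/2}$ entering only through the multiplier lower bound.
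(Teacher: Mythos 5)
Your proof is correct and follows essentially the same route as the paper: telescoping the identity $f(x_k)-f(x_{k+1})=h(\eta_k)^2\norm{\nabla f(x_k)}^2$ obtained from \eqref{eq:LMS}, bounding $\eta_k$ from below by $\etaLB$ via \cref{thm:sol1}, and converting the sum into a min-over-average bound. Your write-up is in fact slightly more careful than the paper's (which has index typos and does not spell out the passage to the infinite sum), but the argument is the same.
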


\begin{proof}
Similar to the proof of \cref{thm:gradrate}, we see that
\begin{align}
	f(x_0) - f^\star &\geq f(x_0) - f(x_k) 
					= -\sum_{i = 0}^{k - 1} \paren*{f(x_{i+1}) - f(x_i)}
					= h\sum_{i = 0}^{k - 1} (\eta_k)^2 \norm*{\nabla f(x_k)}^2\\
					&\geq h (\etaLB)^2 \sum_{i=0}^{k-1} \norm*{\nabla f(x_k)}^2. 
\end{align}
By the definition of $\etaLB$, the estimation above proves the theorem. 
\end{proof}

From the theorem above, we obtain the following result when $f$ is coercive. 

\begin{theorem}\label{thm:grobal}
Assume that $ f $ is coercive. 
Let $\setE*{x_k}_{k=0}^\infty$ be a sequence satisfying \eqref{eq:LMS}, and $ \eta_k \neq 0 $ for any non-negative integer $k$. 
Then, the sequence $\setE*{x_k}_{k=0}^\infty$ has an accumulation point; moreover, $ \nabla f(x^{\ast}) = 0$ holds for any accumulation point $x^{\ast}$. 
\end{theorem}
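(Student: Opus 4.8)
The plan is to combine the monotone decrease of $f$ along the scheme with coercivity to obtain a bounded sequence, and then invoke the summability of the gradients from \cref{thm:LMrate1} to force every accumulation point to be a critical point. First I would establish boundedness of the iterates. The discrete dissipation law (\cref{thm:LMdisp}), applied repeatedly, gives $f(x_k) \le f(x_0)$ for every $k$, so the entire sequence lies in the sublevel set $\setI*{x \in \RR^n}{f(x) \le f(x_0)}$. Coercivity of $f$ makes this set bounded, and continuity of $f$ (which holds since $f$ is $L$-smooth) makes it closed; hence it is compact. The Bolzano--Weierstrass theorem then yields a convergent subsequence, which furnishes the desired accumulation point.

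Next I would show that $\norm*{\nabla f(x_k)} \to 0$ along the \emph{whole} sequence. Because $\eta_k \neq 0$ for every $k$ by hypothesis, \cref{thm:LMrate1} applies and tells us that the series $\sum_{k=0}^{\infty} \norm*{\nabla f(x_k)}^2$ converges (its value is bounded by $(Lh/2+1)^2 (f(x_0)-f^\star)/h$, which is finite since $\argmin f \neq \emptyset$ forces $f^\star$ to be finite). Convergence of a series forces its general term to vanish, so $\norm*{\nabla f(x_k)} \to 0$. I would emphasize that this holds for the full index sequence, not merely along some subsequence; this is the point that lets me treat every accumulation point uniformly in the final step.

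Finally, given an arbitrary accumulation point $x^\ast$, I would choose a subsequence $x_{k_j} \to x^\ast$. Continuity of $\nabla f$ gives $\nabla f(x_{k_j}) \to \nabla f(x^\ast)$, while the previous step gives $\nabla f(x_{k_j}) \to 0$; comparing the two limits yields $\nabla f(x^\ast) = 0$. The argument is essentially routine, so there is no serious obstacle. The one point genuinely requiring care is the role of the hypothesis $\eta_k \neq 0$: it is exactly what makes \cref{thm:LMrate1} available (and hence the summability of the gradients), and it is indispensable, since if some $\eta_k$ vanished the iteration would stall at $x_{k+1}=x_k$ and an accumulation point need not be critical. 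Everything else reduces to the standard chain of boundedness, precompactness, and continuity.
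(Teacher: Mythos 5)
Your proposal is correct and follows essentially the same route as the paper: compactness of the sublevel set $\setI{x \in \RR^n}{f(x) \le f(x_0)}$ via coercivity and the discrete dissipation law, then the summability of $\norm{\nabla f(x_k)}^2$ from \cref{thm:LMrate1} to get $\norm{\nabla f(x_k)} \to 0$ along the full sequence, and finally continuity of $\nabla f$ along a convergent subsequence. Your added remark on why $\eta_k \neq 0$ is indispensable is a sensible clarification but does not change the argument.
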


\begin{proof}
The set $\setI{ x \in \RR^n }{ f(x) \le f(x_0) }$ is compact because $f$ is coercive. 
Since the discrete dissipation law implies $\setE*{x_k}_{k=0}^\infty \subset \setI{x\in\RR^n}{f(x) \le f(x_0)}$,
$\setE*{x_k}_{k=0}^\infty$ has an accumulation point. 

For a fixed accumulation point $x^{\ast}$, there exists a convergent subsequence $\setE*{x_{k(i)}}_{i=0}^\infty$. 
The first equation of \cref{thm:LMrate1} implies that $\lim_{k \to \infty}\norm{\nabla f(x_k)} = 0$. 
Due to the continuity of the norm and $ \nabla f $, we see
\[
	\norm{\nabla f(x^{\ast})} = \norm*{\nabla f\paren*{\lim_{i \to \infty} x_{k(i)}}} = \lim_{i \to \infty} \norm{\nabla f\paren*{x_{k(i)}}} = 0, 
\]
which proves the theorem. 
\end{proof}

Moreover, if $f$ is convex or satisfies the P{\L} inequality, 
we show the discrete counterparts of \cref{thm:convrate,thm:sconvrate}. 

\begin{theorem}\label{thm:LMrate2}
Let $\setE*{x_k}_{k=0}^\infty$ be a sequence satisfying \eqref{eq:LMS}, and $ \eta_k \neq 0 $ for any non-negative integer $k$. 
If $ f $ is convex, the sequence $\setE*{x_k}_{k=0}^\infty$ satisfies
$
	f(x_k) - f^\star = O\paren*{\frac{1}{k}}
$. 
In particular, if $h \leq \frac{2}{L}$, then
\[
	f(x_k) - f^\star \leq \paren*{\frac{Lh + 2}{4}}\frac{\norm{x_0 - x^\star}^2}{kh}
\]
holds.
\end{theorem}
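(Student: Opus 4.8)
The plan is to mimic the continuous-time argument of \cref{thm:convrate} by constructing a discrete Lyapunov functional and showing it is non-increasing along the scheme~\eqref{eq:LMS}. The natural candidate, adapting $\cE(t) = t(f(x)-f^\star) + \frac12\norm{x-x^\star}^2$, is the sequence
\[
    \cE_k := kh\paren*{f(x_k) - f^\star} + \frac{1}{2}\norm*{x_k - x^\star}^2.
\]
First I would estimate the forward difference $\cE_{k+1} - \cE_k$. Using \eqref{eq:LMS1} in the form $x_{k+1} - x_k = -\eta_k h \nabla f(x_k)$, the quadratic term expands as $\frac12\norm*{x_{k+1}-x^\star}^2 - \frac12\norm*{x_k - x^\star}^2 = -\eta_k h \inprd*{\nabla f(x_k)}{x_k - x^\star} + \frac12 \eta_k^2 h^2 \norm*{\nabla f(x_k)}^2$. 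The term $kh(f(x_{k+1}) - f(x_k))$ is controlled by the discrete dissipation law~\eqref{eq:LMS2}, which gives $f(x_{k+1}) - f(x_k) = -\eta_k^2 h \norm*{\nabla f(x_k)}^2$, and there is an extra $h(f(x_{k+1}) - f^\star)$ coming from the increment of the prefactor $k \mapsto k+1$.

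The main obstacle will be reconciling these pieces. In the continuous proof, convexity is applied as $f(x) - f^\star \le \inprd*{\nabla f(x)}{x - x^\star}$ to cancel the leading term, leaving only a non-positive quadratic remainder. Here I expect to use convexity at $x_{k+1}$ (or $x_k$) in the same way, but the discrete argument generates the stray positive term $\frac12 \eta_k^2 h^2 \norm*{\nabla f(x_k)}^2$ from the square, which must be dominated by the dissipation term $-\eta_k^2 h \norm*{\nabla f(x_k)}^2$ (times an appropriate power of $k$ or $h$). This is exactly where the hypothesis $h \le 2/L$ should enter: by \cref{thm:ub} it forces $\eta_k \le (1 - Lh/2)^{-1}$, and combined with the lower bound $\eta_k \ge \etaLB = (1+Lh/2)^{-1}$ from \cref{thm:sol1}, one obtains two-sided control on $\eta_k$ that makes the positive remainder absorbable. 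I would carefully choose whether to evaluate convexity at $x_k$ or $x_{k+1}$ so that the $\eta_k$-dependence of the surviving terms has a definite sign.

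Once $\cE_{k+1} \le \cE_k$ is established, the conclusion follows exactly as in the continuous case: telescoping gives $\cE_k \le \cE_0 = \frac12\norm*{x_0 - x^\star}^2$, and since $kh(f(x_k) - f^\star) \le \cE_k$, we get $f(x_k) - f^\star \le \norm*{x_0 - x^\star}^2/(2kh)$, which is the $O(1/k)$ claim. The sharper constant $(Lh+2)/4$ in the displayed bound should emerge from tracking the factor by which the dissipation term beats the quadratic remainder; concretely I expect the bound $\eta_k \le (1 - Lh/2)^{-1}$ together with $\eta_k \ge \etaLB$ to yield the factor $\frac{Lh+2}{4}$ after simplifying $\frac{1}{2\etaLB}$ or a comparable ratio, so I would verify that the telescoped inequality carries precisely this multiplicative constant rather than the bare $\frac12$.
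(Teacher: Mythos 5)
Your overall strategy (a discrete Lyapunov functional mimicking \cref{thm:convrate}) is the right one and is exactly what the paper does, but your candidate functional $\cE_k = kh\paren*{f(x_k)-f^\star} + \frac{1}{2}\norm*{x_k-x^\star}^2$ contains a genuine gap that the hypotheses you invoke do not repair. Expanding the difference as you describe, and using \eqref{eq:LMS1}--\eqref{eq:LMS2}, gives
\[
\cE_{k+1}-\cE_k = -kh^2\eta_k^2\norm*{\nabla f(x_k)}^2 + h\paren*{f(x_{k+1})-f^\star} - h\eta_k\inprd*{\nabla f(x_k)}{x_k-x^\star} + \tfrac{1}{2}h^2\eta_k^2\norm*{\nabla f(x_k)}^2 ,
\]
and convexity at $x_k$ replaces $-h\eta_k\inprd*{\nabla f(x_k)}{x_k-x^\star}$ by $-h\eta_k\paren*{f(x_k)-f^\star}$. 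The problem is the mismatch between this cancellation, which carries the factor $\eta_k$, and the prefactor increment $h\paren*{f(x_{k+1})-f^\star}$, which does not: after using $f(x_{k+1})\le f(x_k)$ you are left with a residual $h\paren*{1-\eta_k}\paren*{f(x_k)-f^\star}$, which is nonnegative precisely because $\eta_k\le 1$ in the convex case (\cref{thm:ubconv}). This residual is proportional to the function gap, not to $\norm*{\nabla f(x_k)}^2$, so it cannot be absorbed by the dissipation term for a general convex $f$ (no P{\L} inequality is assumed, and $\norm*{\nabla f(x_k)}^2$ admits no lower bound in terms of $f(x_k)-f^\star$). The two-sided bounds on $\eta_k$ that you cite only control the harmless quadratic remainder $\frac{1}{2}h^2\eta_k^2\norm*{\nabla f(x_k)}^2$, which is indeed dominated by the dissipation once $k\ge1$; they do nothing for the function-value residual. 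Hence $\cE_{k+1}\le\cE_k$ does not follow for your choice of $\cE_k$.

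The fix, and the paper's actual proof, is to weight by the \emph{accumulated effective time} rather than by $kh$: take $\cE_k := \paren*{\sum_{i=0}^{k-1}h\eta_i}\paren*{f(x_k)-f^\star}+\frac{1}{2}\norm*{x_k-x^\star}^2$. Then the prefactor increment is $h\eta_k\paren*{f(x_k)-f^\star}$, which is cancelled \emph{exactly} by the convexity estimate, leaving $\cE_{k+1}-\cE_k\le\paren*{\frac{1}{2}-\sum_{i=0}^{k}\eta_i}\norm*{h\eta_k\nabla f(x_k)}^2$. This is nonpositive for all $k\ge k_0$ for some $k_0$ because $\eta_i\ge\etaLB>0$ (giving the unconditional $O(1/k)$ claim), and for all $k$ when $h\le 2/L$ since then $\etaLB\ge\frac12$. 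Finally, note that the constant $\frac{Lh+2}{4}$ does not come from balancing the quadratic remainder, as you conjecture, but from the lower bound on the accumulated time in the prefactor: $\sum_{i=0}^{k-1}h\eta_i\ge kh\,\etaLB=\frac{2kh}{Lh+2}$ together with $\cE_k\le\cE_0=\frac12\norm*{x_0-x^\star}^2$ yields the stated bound.
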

\begin{proof}
Let us introduce the discrete counterpart
\[
	\cE_k := \paren*{\sum_{i=0}^{k-1} h\eta_i}\paren*{f(x_k) - f^\star} + \frac{1}{2}\norm*{x_k - x^\star}^2
\]
of $ \cE $ in the proof of \cref{thm:convrate}. 
Then, we see
\begin{align}
	\mathcal{E}_{k+1} - \mathcal{E}_k &= \left(\sum_{i=0}^{k}h\eta_i\right)(f(x_{k+1}) - f(x_k)) + h\eta_k(f(x_k) - f^\star)\\
	&\qquad + \frac{1}{2}\inprd{x_{k+1} - x_k}{x_{k+1} + x_k - 2x^\star}. 
\end{align}
Here, the last term on the right-hand side can be evaluated as follows:
\begin{align}
	\frac{1}{2}\inprd{x_{k+1} - x_k}{x_{k+1} + x_k - 2x^\star} &= \frac{1}{2}\inprd{x_{k+1} - x_k}{x_{k+1} - x_k +2(x_k - x^\star)} \\
	&= \frac{1}{2}\|h\eta_k\nabla f(x_k)\|^2 + \inprd{h\eta_k \nabla f(x_k)}{x^\star - x_k} \\
	&\leq \frac{1}{2}\|h\eta_k\nabla f(x_k)\|^2 + h\eta_k(f^\star - f(x_k)). 
\end{align}
Using this evaluation, we see that
\begin{align}
	\cE_{k+1} - \cE_k &\leq \left(\sum_{i=0}^{k}h\eta_i\right)\left(-h(\eta_k)^2\|\nabla f(x_k)\|^2\right) + \frac{1}{2}\|h\eta_k\nabla f(x_k)\|^2 \\
	&= \left( \frac{1}{2} - \sum_{i=0}^{k} \eta_i \right)\|h\eta_k\nabla f(x_k)\|^2. 
\end{align}
Because $\eta_k \geq \left(\frac{Lh}{2} + 1\right)^{-1} $, there exists $k_0 \in \NN$ such that $\frac{1}{2} - \sum_{i=0}^{k} \eta_i \leq 0$ holds for any $k \geq k_0$. 
Therefore, we see that
\[
	\cE_{k_0} \geq \cE_k \geq \left(\sum_{i=0}^{k-1}h\eta_i\right)(f(x_k) - f^\star) \geq kh\left(\frac{Lh}{2} + 1\right)^{-1}(f(x_k) - f^\star),
\]
which implies $f(x_k) - f^\star = O\left(\displaystyle\frac{1}{k}\right)$. 
Moreover, if $h \leq \frac{2}{L}$, then $\eta_k \geq \paren*{\frac{Lh}{2} + 1}^{-1} \ge \frac{1}{2}$ holds. 
Since $\cE_{k+1} \leq \cE_k$ holds for any $k$, 
\[
	\frac{1}{2}\norm*{x_0 - x^\star}^2 = \cE_0 \geq \cE_k \geq kh\left(\frac{Lh}{2} + 1\right)^{-1}(f(x_k) - f^\star)
\]
holds. 
\end{proof}

\begin{theorem}\label{thm:LMrate3}
Let $\setE*{x_k}_{k=0}^\infty$ be a sequence satisfying \eqref{eq:LMS}, and $ \eta_k \neq 0 $ for any non-negative integer $k$. 
If $f$ satisfies the Polyak--{\L}ojasiewicz inequality~\eqref{eq:PL} with parameter $\mu > 0$, 
the sequence $\setE*{x_k}_{k=0}^\infty$ satisfies
\[
	f(x_k) - f^\star \leq \exp\paren*{-\frac{8\mu kh}{\paren*{Lh+2}^2}}\paren*{f(x_0) - f^\star}.
\]
\end{theorem}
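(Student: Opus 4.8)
The plan is to mirror the continuous-time argument of \cref{thm:sconvrate}: establish a one-step contraction and then iterate it. First I would combine the two scheme equations \eqref{eq:LMS1} and \eqref{eq:LMS2} to obtain the exact discrete dissipation identity
\[
	f(x_{k+1}) - f(x_k) = \eta_k \inprd*{\nabla f(x_k)}{x_{k+1} - x_k} = -h \paren*{\eta_k}^2 \norm*{\nabla f(x_k)}^2,
\]
which is the discrete analogue of the computation $\dot{\cL}(t) = -\norm{\nabla f(x)}^2$ used in \cref{thm:sconvrate}.

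Next I would insert the P\L{} inequality \eqref{eq:PL}, written as $\norm*{\nabla f(x_k)}^2 \ge 2\mu\paren*{f(x_k) - f^\star}$. Since the coefficient $-h(\eta_k)^2$ is nonpositive, this turns the identity into the recursion
\[
	f(x_{k+1}) - f^\star \le \paren*{1 - 2\mu h (\eta_k)^2}\paren*{f(x_k) - f^\star}.
\]

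Then I would use the lower bound $\eta_k \ge \etaLB = \paren*{1 + Lh/2}^{-1}$ provided by \cref{thm:sol1} together with the elementary inequality $1 - s \le \exp(-s)$, which holds for \emph{all} real $s$, to estimate the per-step factor:
\[
	1 - 2\mu h(\eta_k)^2 \le \exp\paren*{-2\mu h(\eta_k)^2} \le \exp\paren*{-\frac{8\mu h}{\paren*{Lh+2}^2}}.
\]
Because $1 - s \le e^{-s}$ is valid even when $1 - 2\mu h(\eta_k)^2 < 0$, no smallness restriction on $h$ is required. As $f(x_k) - f^\star \ge 0$, multiplying through preserves the inequality, and iterating the resulting one-step bound from index $k$ down to $0$ yields the stated estimate.

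The computations themselves are routine; the one point requiring care is that the hypothesis of the theorem is merely $\eta_k \neq 0$, whereas the contraction factor needs the \emph{quantitative} bound $\eta_k \ge \etaLB$. I would close this gap by recalling from the proof of \cref{thm:ub} that $F_h(\cdot\,; x_k) < 0$ on $\paren*{0, \etaLB}$, so every positive root of $F_h(\eta_k; x_k) = 0$ automatically satisfies $\eta_k \ge \etaLB$; hence the estimate applies to the nontrivial (positive) solution that is actually used in computation, as discussed in \cref{rem:ex_gen_unique}.
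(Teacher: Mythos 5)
Your proof is correct and takes essentially the same route as the paper's: the one-step identity $f(x_{k+1}) - f(x_k) = -h(\eta_k)^2\norm{\nabla f(x_k)}^2$, insertion of the P{\L} inequality, the lower bound $\eta_k \ge \etaLB$ from \cref{thm:sol1}, and the elementary estimate $1-s \le \re^{-s}$. The only differences are cosmetic (you exponentiate the per-step factor before iterating, whereas the paper iterates $\paren*{1 - 8\mu h/(Lh+2)^2}$ and exponentiates at the end) plus your added justification that positive roots of $F_h(\,\cdot\,;x_k)=0$ satisfy $\eta_k \ge \etaLB$, which is a welcome precision the paper leaves implicit.
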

\begin{proof}
We introduce the discrete counterpart $ \cL_k := f(x_k) - f^\star $ of $ \cL $ in the proof of \cref{thm:sconvrate}. 
Then, we see that
\begin{align}
	\cL_{k+1} - \cL_k &= f(x_{k+1}) - f(x_k) 
					 = -h(\eta_k)^2\norm{\nabla f(x_k)}^2 
					 \leq -2\mu h(\eta_k)^2 \paren*{f(x_k) - f^\star}\\
					 &\leq -2\mu h\paren*{\frac{Lh}{2} + 1}^{-2} \cL_{k} 
\end{align}
and
\[
	\cL_{k+1} \leq \paren*{1 - \frac{8 \mu h}{\paren*{Lh + 2}^2}} \cL_k.
\]
Because $1 + r \leq \re^r$ holds for any real number $ r $, we obtain
\[
	\cL_{k} \leq \paren*{1 - \frac{8 \mu h}{\paren*{Lh + 2}^2}}^k \cL_0 \leq \exp\paren*{- \frac{8 \mu kh}{\paren*{Lh + 2}^2}}\paren*{f(x_0) - f^\star}.
\]
\end{proof}

\section{Some relaxations of the Lagrange multiplier method}
\label{sec:proposed}

As an optimization method, the scheme~\eqref{eq:LMS} is still more expensive than the standard optimization methods. 
Therefore, in this section, 
we propose a relaxation of the scheme~\eqref{eq:LMS} that allows us to use a backtracking technique. 

In view of the dissipation law (\cref{thm:LMdisp}), 
condition~\eqref{eq:LMS2} can be relaxed to $F_h (\eta_k; x_k) \le 0 $:
we consider 
\begin{subequations}\label{eq:LMI}
\begin{align}
	\frac{x_{k+1} - x_k}{h} &= -\eta_k \nabla f(x_k), \retainlabel{eq:LMI1}\\
	f(x_{k+1}) - f(x_k) &\le \eta_k \inprd*{\nabla f(x_k)}{x_{k+1} - x_k}.  \retainlabel{eq:LMI2}
\end{align}
\end{subequations}

\begin{theorem}\label{thm:ddl}
A solution $ x_{k+1} $ of the scheme~\eqref{eq:LMI} satisfies the discrete dissipation law $ f(x_{k+1}) \le f(x_k) $
\end{theorem}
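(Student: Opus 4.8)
The plan is to observe that the discrete dissipation law for the relaxed scheme~\eqref{eq:LMI} follows immediately from the relaxed condition~\eqref{eq:LMI2} once the update rule~\eqref{eq:LMI1} has been substituted into the inner product on its right-hand side. The essential point is that the proof of \cref{thm:LMdisp} never relied on the equality in~\eqref{eq:LMS2}; only a one-sided bound of the form~\eqref{eq:LMI2} is required, so the same argument carries over verbatim.

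First I would use~\eqref{eq:LMI1} to write $ x_{k+1} - x_k = -h \eta_k \nabla f(x_k) $, so that
\[
    \inprd*{\nabla f(x_k)}{x_{k+1} - x_k} = -h \eta_k \norm*{\nabla f(x_k)}^2.
\]
Substituting this identity into the right-hand side of the relaxed condition~\eqref{eq:LMI2} then yields
\[
    f(x_{k+1}) - f(x_k) \le -h (\eta_k)^2 \norm*{\nabla f(x_k)}^2.
\]
Since $ h > 0 $ and both $ (\eta_k)^2 $ and $ \norm*{\nabla f(x_k)}^2 $ are non-negative, the right-hand side is non-positive, and hence $ f(x_{k+1}) \le f(x_k) $, which is the desired dissipation law.

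I do not expect any genuine obstacle here: the whole purpose of replacing the equality~\eqref{eq:LMS2} by the inequality~\eqref{eq:LMI2} is that the dissipation argument needs only an \emph{upper} bound on $ f(x_{k+1}) - f(x_k) $, not its exact value. Consequently the relaxation is ``free'' as far as the dissipation law is concerned, and this is precisely the structural fact that makes the backtracking variant of the scheme admissible while still replicating the monotone decrease of $ f $.
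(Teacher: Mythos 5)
Your argument is correct and is essentially identical to the paper's own proof: both substitute the update rule \eqref{eq:LMI1} into the right-hand side of \eqref{eq:LMI2} to obtain $f(x_{k+1}) - f(x_k) \le -h(\eta_k)^2\norm*{\nabla f(x_k)}^2 \le 0$. No issues.
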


\begin{proof}
We see that
\begin{align}
    f(x_{k+1}) - f(x_k) 
    \le \eta_k \inprd*{ \nabla f (x_k) }{x_{k+1} - x_k} 
    = - h \paren*{ \eta_k }^2 \norm*{ \nabla f(x_k) }^2,
\end{align}
which proves the theorem. 
\end{proof}

Because the discrete dissipation law is crucial in the discussion in the previous section, 
we can prove the convergence rates even after this relaxation (\cref{subsec:bt}); moreover, we propose another method to adaptively change $h$ at every step, and also show convergence rates for it in \cref{subsec:adaptive}. 

\subsection{A relaxation of the Lagrange multiplier method}
\label{subsec:bt}

In this section, we consider \cref{alg:bt}. 

\begin{algorithm}[ht]
\caption{Backtracking}
\label{alg:bt}
 \begin{algorithmic}
 \Procedure{BACKTRACKING}{$x_0,h,\epsilon,\alpha$}
 \State $k \leftarrow 0$
 \While{$\norm{\nabla f(x_k)} \geq \epsilon$}
	\State $\eta_k \leftarrow 1$
 	\While {$F_h (\eta_k; x_k) > 0$}
 		\State $\eta_k \leftarrow \alpha \eta_k $
 	\EndWhile
 	\State $x_{k+1} \leftarrow x_k - h\eta_k\nabla f(x_k)$
 	\State $k \leftarrow k + 1$
 \EndWhile
 \State \textbf{return} $x_k \in \RR^n$
 \EndProcedure
 \end{algorithmic}
\end{algorithm}

Because \cref{thm:gradrate,thm:convrate,thm:sconvrate} rely on the lower bound of $ \eta_k $ as well as the discrete dissipation law, 
we establish the following lemma. 

\begin{lemma}\label{lem:lb_bt}
The iteration of backtracking in \cref{alg:bt} stops at most $ \lceil \log_{\alpha} \etaLB \rceil $ times
so that $ \eta_k \ge \alpha \etaLB $ holds. 
\end{lemma}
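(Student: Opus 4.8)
The plan is to exploit the one-sided bound on $F_h$ already derived inside the proof of \cref{thm:sol1}. Inequality~\eqref{eq:Fbound} gives
\[
	F_h(\eta; x_k) \le \eta h \norm*{\nabla f(x_k)}^2 \paren*{\eta\paren*{1 + \frac{Lh}{2}} - 1}
\]
for every $\eta$, so that $F_h(\eta; x_k) \le 0$ whenever $0 \le \eta \le \etaLB$. Equivalently, since every multiplier produced by the algorithm is positive, the loop-continuation test can succeed only above the threshold: $F_h(\eta; x_k) > 0$ forces $\eta > \etaLB$. This single implication drives both claims.

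First I would track the iterate explicitly: the multiplier is initialised at $\alpha^0 = 1$ and multiplied by $\alpha$ on each pass, so after $j$ executions of the inner loop it equals $\eta_k = \alpha^j$. The loop halts as soon as $F_h(\alpha^j; x_k) \le 0$, which by the implication above is guaranteed once $\alpha^j \le \etaLB$. Because $0 < \alpha < 1$, taking $\log_{\alpha}$ reverses the inequality and this condition reads $j \ge \log_{\alpha} \etaLB$; the smallest such integer is $\lceil \log_{\alpha} \etaLB \rceil$, which is the asserted bound on the number of backtracking steps. (Note that $\etaLB < 1$ makes $\log_{\alpha} \etaLB > 0$, so this is a genuine positive count.)

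For the lower bound on the accepted multiplier I would split into two cases. If the inner loop never executes, then $\eta_k = 1 \ge \alpha \etaLB$ trivially, since $\alpha \etaLB < 1$. Otherwise let $j \ge 1$ be the number of executions; the value $\alpha^{j-1}$ present just before the final multiplication still passed the continuation test, i.e.\ $F_h(\alpha^{j-1}; x_k) > 0$, whence $\alpha^{j-1} > \etaLB$ by the implication above. Multiplying by $\alpha$ yields the accepted value $\eta_k = \alpha^{j} > \alpha \etaLB$, as required. The same case split also absorbs the situation $\nabla f(x_k) = 0$, in which $F_h \equiv 0$ and the loop exits immediately with $\eta_k = 1$.

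I do not anticipate a genuine obstacle here: the argument is essentially a reinterpretation of \eqref{eq:Fbound} as a halting criterion for the geometric sequence $\alpha^j$. The only points that demand care are the reversal of the inequality when passing to $\log_{\alpha}$ with base $\alpha < 1$, and the harmless edge cases (the loop terminating on the first test, or $\nabla f(x_k) = 0$), all of which are handled by the case split above.
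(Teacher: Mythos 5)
Your proposal is correct and follows essentially the same route as the paper: both rest on the bound \eqref{eq:Fbound} from the proof of \cref{thm:sol1}, which gives $F_h(\eta;x_k)\le 0$ for all $\eta\in(0,\etaLB]$, and then count how many times the geometric sequence $\alpha^j$ can stay above $\etaLB$. The only cosmetic difference is in the final step: the paper bounds $\eta_k\ge\alpha^{\lceil\log_\alpha\etaLB\rceil}\ge\alpha^{\log_\alpha\etaLB+1}=\alpha\etaLB$ via $\lceil s\rceil\le s+1$, whereas you observe that the penultimate value $\alpha^{j-1}$ must exceed $\etaLB$, which yields the same conclusion.
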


\begin{proof}
As shown in the proof of \cref{thm:sol1}, $ F_h ( \eta; x_k ) \le 0 $ holds for any $ \eta \le \etaLB $. 
Because $ \alpha^{ \lceil \log_{\alpha} \etaLB \rceil } \le \alpha^{ \log_{\alpha} \etaLB } = \etaLB $, 
the iteration stops at most $ \lceil \log_{\alpha} \etaLB \rceil $ times. 
Therefore, we see that $ \eta_k \ge  \alpha^{ \lceil \log_{\alpha} \etaLB \rceil } \ge \alpha^{ \log_{\alpha} \etaLB + 1 } = \alpha \etaLB $. 
\end{proof}

By using the lemma, we obtain the following convergence results. 
We omit the proof because it can be proved in a manner similar to that in \cref{thm:gradrate,thm:convrate,thm:sconvrate}. 

\begin{theorem}\label{thm:convrate_bt}
    The sequence $ \{ x_k \}_{k=0}^{\infty} $ obtained by \cref{alg:bt} satisfies
    \[ \min_{ 0 \le i \le k } \norm*{ \nabla f (x_i) } \le \frac{ L h + 2 }{2 \alpha} \sqrt{ \frac{ f(x_0) - f^\star }{ (k+1) h } }. \]
    Moreover, if $ f $ is convex, 
    \[ f ( x_k) - f^\star = O \paren*{ \frac{1}{k} } \]
    holds. 
    If $ f $ satisfies the Polyak--{\L}ojasiewicz inequality~\eqref{eq:PL} with parameter $\mu > 0$, 
    \[ f(x_k) - f^\star \le \exp \paren*{ - \frac{ 8 \alpha^2 \mu k h }{ \paren*{ L h + 2 }^2 }  } \paren*{ f(x_0) - f^\star } \]
    holds.
\end{theorem}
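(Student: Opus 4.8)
The plan is to observe that all three estimates are the discrete counterparts of \cref{thm:gradrate,thm:convrate,thm:sconvrate} already proved for the equality scheme in \cref{thm:LMrate1,thm:LMrate2,thm:LMrate3}, and that the two structural ingredients powering those proofs remain available for \cref{alg:bt}. Indeed, the sequence produced by the backtracking satisfies the relaxed scheme~\eqref{eq:LMI}, so \cref{thm:ddl} yields the dissipation $ f(x_{k+1}) - f(x_k) \le - h \eta_k^2 \norm{\nabla f(x_k)}^2 $, while \cref{lem:lb_bt} supplies the uniform lower bound $ \eta_k \ge \alpha \etaLB > 0 $ (which in particular guarantees $ \eta_k \neq 0 $). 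The whole proof then amounts to rerunning each earlier argument with $ \etaLB $ replaced by $ \alpha \etaLB $ and with the dissipation equality replaced by the corresponding inequality; note that $ (\alpha \etaLB)^{-1} = (Lh+2)/(2\alpha) $ accounts for the constant in the gradient bound.

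First I would dispatch the gradient bound and the P{\L} estimate, since both use only the upper bound on $ f(x_{k+1}) - f(x_k) $. Telescoping the dissipation gives $ f(x_0) - f^\star \ge h (\alpha \etaLB)^2 \sum_{i=0}^{k} \norm{\nabla f(x_i)}^2 \ge h (\alpha \etaLB)^2 (k+1) \min_{0 \le i \le k} \norm{\nabla f(x_i)}^2 $, and taking square roots yields the first inequality. For the P{\L} case, setting $ \cL_k := f(x_k) - f^\star $ and combining the dissipation, the P{\L} inequality~\eqref{eq:PL}, and $ \eta_k \ge \alpha \etaLB $ gives $ \cL_{k+1} \le (1 - 2\mu h (\alpha \etaLB)^2) \cL_k = \paren*{1 - \frac{8 \alpha^2 \mu h}{(Lh+2)^2}} \cL_k $; the elementary bound $ 1 + r \le \re^r $ then produces the claimed exponential decay exactly as in \cref{thm:LMrate3}.

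The convex case is where I expect the only real care to be needed, because the proof of \cref{thm:LMrate2} used the dissipation as an equality. Reusing the Lyapunov function $ \cE_k := \paren*{\sum_{i=0}^{k-1} h \eta_i}(f(x_k) - f^\star) + \frac12 \norm{x_k - x^\star}^2 $, the term $ \paren*{\sum_{i=0}^{k} h \eta_i}(f(x_{k+1}) - f(x_k)) $ carries the positive coefficient $ \sum_{i=0}^{k} h \eta_i > 0 $, so replacing the equality by the inequality $ f(x_{k+1}) - f(x_k) \le - h \eta_k^2 \norm{\nabla f(x_k)}^2 $ pushes the estimate in the right direction and the bound $ \cE_{k+1} - \cE_k \le \paren*{\frac12 - \sum_{i=0}^{k} \eta_i} \norm{h \eta_k \nabla f(x_k)}^2 $ survives verbatim. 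Since $ \eta_i \ge \alpha \etaLB > 0 $ forces the partial sums $ \sum_{i=0}^{k} \eta_i $ to diverge, there is a $ k_0 $ beyond which $ \cE_k $ is nonincreasing; combining $ \cE_{k_0} \ge \cE_k \ge \paren*{\sum_{i=0}^{k-1} h \eta_i}(f(x_k) - f^\star) \ge k h \alpha \etaLB (f(x_k) - f^\star) $ then gives $ f(x_k) - f^\star = O(1/k) $. Tracking this sign through the Lyapunov estimate is the main obstacle; the remaining manipulations are identical to the earlier proofs.
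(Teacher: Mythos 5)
Your proposal is correct and follows exactly the route the paper intends: it invokes \cref{lem:lb_bt} for the lower bound $\eta_k \ge \alpha\etaLB$ and \cref{thm:ddl} for the dissipation inequality, then reruns the arguments of \cref{thm:LMrate1,thm:LMrate2,thm:LMrate3} with $\etaLB$ replaced by $\alpha\etaLB$ (the paper omits this proof precisely because it is "similar" in this sense). Your observation that the convex case needs the sign check on the coefficient $\sum_{i=0}^{k} h\eta_i > 0$ when the dissipation equality is weakened to an inequality is the one genuinely nontrivial point, and you handle it correctly.
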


\subsection{Adaptive step size}
\label{subsec:adaptive}

In this section, we consider adaptively changing the step size $ h_k $ in every step. 
Here, instead of $F_h ( \eta_k ; x_k ) \le 0 $, we use the condition $ F_{h_k} (\eta_k ; x_k ) \le 0 $.
Then, $ h_{k+1} $ is defined by $ h_{k+1} = h_k \eta_k / \eta^{\ast} $, 
which is intended to maintain $ \eta_{k+1} $ around a fixed constant $ \eta^{\ast} $. 
As shown in the numerical experiments in the next section, 
this simple strategy reduces the number of backtracking iterations, 
and the numerical result does not depend significantly on the choice of $ h_0$.

\begin{algorithm}[ht]
\caption{Adaptive step size}
\label{alg:adaptive}
 \begin{algorithmic}
 \Procedure{ADAPTIVE}{$x_0, h_0 ,\epsilon, \alpha, \eta^{\ast} $} 
 \State $k \leftarrow 0$
 \While {$\norm{\nabla f(x_k)} \geq \epsilon$}
	\State $\eta_k \leftarrow 1$
 	\While {$F_{h_k} (\eta_k; x_k) > 0$}
 		\State $\eta_k \leftarrow \displaystyle\alpha \eta_k$
 	\EndWhile
 	\State $x_{k+1} \leftarrow x_k - h_k \eta_k\nabla f(x_k)$
 	\State $h_{k+1} \leftarrow \displaystyle\frac{ h_k \eta_k }{ \eta^{\ast} }$
 	\State $k \leftarrow k + 1$
 \EndWhile
 \State \textbf{return} $x_k \in \RR^n$
 \EndProcedure
 \end{algorithmic}
\end{algorithm}

In view of \cref{lem:lb_bt}, we see $ \eta_k \ge \frac{ 2 \alpha }{ L h_k + 2 } $. 
The assumption $ \eta^{\ast} < \alpha $ ensures that $\{ h_k \}_{k=0}^{\infty} $ is bounded from below, as shown in the lemma below. 
This lower bound of $\{ h_k \}_{k=0}^{\infty} $ will be used in the proof of convergence rates. 

\begin{lemma}
If $ h_0 \ge h_{\mathrm{LB}} := \frac{2 (\alpha - \eta^{\ast})}{ \eta^{\ast} L } $, then 
$ h_k \ge h_{\mathrm{LB}} $ holds for any positive integer $k$. 
\end{lemma}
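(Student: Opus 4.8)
The plan is to argue by induction on $k$. The base case $h_0 \ge h_{\mathrm{LB}}$ is exactly the hypothesis, so the real work is the inductive step: assuming $h_k \ge h_{\mathrm{LB}}$, I would deduce $h_{k+1} \ge h_{\mathrm{LB}}$.

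First I would combine the two ingredients already assembled immediately before the lemma. The update rule reads $h_{k+1} = h_k \eta_k / \eta^{\ast}$, and the bound obtained from \cref{lem:lb_bt} reads $\eta_k \ge 2\alpha/(Lh_k + 2)$. Substituting the latter into the former yields $h_{k+1} \ge g(h_k)$, where I set $g(h) := \frac{2\alpha h}{\eta^{\ast}(Lh+2)}$. This reduces the entire problem to understanding the scalar map $g$ on $(0,\infty)$.

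The key observation is that $h_{\mathrm{LB}}$ is a fixed point of $g$: a one-line substitution gives $Lh_{\mathrm{LB}} + 2 = 2\alpha/\eta^{\ast}$, whence $g(h_{\mathrm{LB}}) = h_{\mathrm{LB}}$. Moreover $g$ is strictly increasing, since $h \mapsto h/(Lh+2)$ has positive derivative $2/(Lh+2)^2$. Combining these two properties, the inductive hypothesis $h_k \ge h_{\mathrm{LB}}$ gives $h_{k+1} \ge g(h_k) \ge g(h_{\mathrm{LB}}) = h_{\mathrm{LB}}$, which closes the induction.

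The computations are entirely routine, so I do not expect a serious obstacle; the only point demanding care is the role of the standing assumption $\eta^{\ast} < \alpha$. It does not enter the monotonicity argument at all, but it is precisely what makes $h_{\mathrm{LB}} > 0$, so that the asserted lower bound is nontrivial (if $\eta^{\ast} \ge \alpha$ the claim holds vacuously, since $h_0 > 0$ and the recursion keeps every $h_k$ positive). I would also observe that the same fixed-point identity shows $h_{\mathrm{LB}}$ to be the sharp lower bound produced by this recursion, which explains its appearance in the convergence-rate arguments that follow.
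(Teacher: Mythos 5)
Your proof is correct and follows essentially the same route as the paper: induction using $h_{k+1} = h_k\eta_k/\eta^{\ast} \ge \frac{2\alpha h_k}{\eta^{\ast}(Lh_k+2)}$, the monotonicity of $h \mapsto h/(Lh+2)$, and the identity $Lh_{\mathrm{LB}}+2 = 2\alpha/\eta^{\ast}$ showing $h_{\mathrm{LB}}$ is a fixed point. You merely make explicit the monotonicity and fixed-point observations that the paper's one-line chain of inequalities uses implicitly.
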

\begin{proof}
We prove the lemma by induction. 
Suppose that $ h_k \ge h_{\mathrm{LB}} $ holds.
Then, we see that 
\[ h_{k+1} = \frac{ h_k \eta_k }{\eta^{\ast}} \ge \frac{ 2 \alpha h_k }{ \eta^{\ast} ( L h_k + 2) } \ge \frac{ 2 \alpha h_{\mathrm{LB}} }{ \eta^{\ast} ( L h_{\mathrm{LB}} + 2) } = h_{\mathrm{LB}}, \]
which proves the lemma.
\end{proof}

\subsubsection{Convex functions}

In this section, we deal with convex functions. 

\begin{theorem}\label{thm:LMrate_ad_conv}
We assume that $ h_0 \ge h_{\mathrm{LB}} $ and $ \eta^{\ast} \ge \frac{1}{2} $ hold. 
If $ f $ is convex, the sequence $\setE*{x_k}_{k=0}^\infty$ obtained by \cref{alg:adaptive} satisfies
\[
	f(x_k) - f^\star \leq \paren*{\frac{L}{4 \paren*{ \alpha - \eta^{\ast} } }}\frac{\norm{x_0 - x^\star}^2}{k}.
\]
\end{theorem}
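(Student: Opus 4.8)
The plan is to mirror the proof of \cref{thm:LMrate2}, replacing the constant step size $h$ by the adaptive sequence $h_k$ throughout. First I would introduce the discrete Lyapunov functional
\[
	\cE_k := \paren*{ \sum_{i=0}^{k-1} h_i \eta_i } \paren*{ f(x_k) - f^\star } + \frac{1}{2} \norm*{ x_k - x^\star }^2,
\]
the natural adaptive analogue of the energy used in the fixed-step case. Writing $A_{k} := \sum_{i=0}^{k-1} h_i \eta_i$, the update $x_{k+1} = x_k - h_k \eta_k \nabla f(x_k)$, the relaxed dissipation $ f(x_{k+1}) - f(x_k) \le - h_k \eta_k^2 \norm*{\nabla f(x_k)}^2 $ (the defining inequality $F_{h_k}(\eta_k; x_k) \le 0$ of \cref{alg:adaptive}, cf. \cref{thm:ddl}), and convexity of $f$ give, exactly as in \cref{thm:LMrate2},
\[
	\cE_{k+1} - \cE_k \le h_k \eta_k^2 \norm*{ \nabla f(x_k) }^2 \paren*{ \frac{1}{2} h_k - A_{k+1} }.
\]

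The crux is to show that the bracket is non-positive, so that $\cE_k$ is non-increasing. Here the step-size rule plays the decisive role: since $h_{i+1} = h_i \eta_i / \eta^{\ast}$, every summand satisfies the identity $h_i \eta_i = \eta^{\ast} h_{i+1}$, whence $A_{k+1} = \eta^{\ast} \sum_{i=1}^{k+1} h_i$. For $k \ge 1$ the index $k$ lies in $\setE*{1, \dots, k+1}$, so $A_{k+1} \ge \eta^{\ast} h_k \ge \frac{1}{2} h_k$ by the hypothesis $\eta^{\ast} \ge \frac{1}{2}$; equivalently, the single term $h_{k-1}\eta_{k-1} = \eta^{\ast} h_k$ already dominates $\frac{1}{2} h_k$. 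This forces $\cE_{k+1} \le \cE_k$, and telescoping yields $\cE_k \le \cE_0 = \frac{1}{2}\norm*{x_0 - x^\star}^2$.

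Finally I would convert this into the stated rate by bounding $A_k$ from below. Using $A_k = \eta^{\ast} \sum_{i=1}^{k} h_i$ together with the lower bound $h_i \ge h_{\mathrm{LB}} = 2(\alpha - \eta^{\ast})/(\eta^{\ast} L)$ from the preceding lemma gives $A_k \ge \eta^{\ast} k\, h_{\mathrm{LB}} = 2k(\alpha - \eta^{\ast})/L$. Combining with $A_k \paren*{f(x_k) - f^\star} \le \cE_k \le \frac{1}{2}\norm*{x_0 - x^\star}^2$ then produces the claimed inequality. The main obstacle I anticipate is the sign of the bracket at the very first step ($k=0$), where the telescoping term $h_{k-1}\eta_{k-1}$ is absent and the argument reduces to needing $\eta_0 \ge \frac{1}{2}$; I would handle this either by verifying $\eta_0 \ge \frac{1}{2}$ from the backtracking bound $\eta_0 \ge 2\alpha/(L h_0 + 2)$ (\cref{lem:lb_bt}) under the standing hypotheses, or by absorbing the first iterate and starting the telescope at $\cE_1$. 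Aside from this boundary bookkeeping, the estimate is a routine adaptation of \cref{thm:LMrate2}.
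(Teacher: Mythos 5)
Your proposal is correct and follows essentially the same route as the paper's proof: the same adaptive Lyapunov function $\cE_k$, the same use of the identity $h_i\eta_i=\eta^{\ast}h_{i+1}$ to make the bracket $\tfrac12 h_k-\sum_{i=0}^{k}h_i\eta_i$ nonpositive under $\eta^{\ast}\ge\tfrac12$, and the same final lower bound $\sum_{i=0}^{k-1}h_i\eta_i\ge k\eta^{\ast}h_{\mathrm{LB}}=2k(\alpha-\eta^{\ast})/L$. The $k=0$ boundary case you flag is a genuine loose end, but the paper does not resolve it either (its rewriting $\eta^{\ast}\sum_{i=1}^{k}h_i=\eta^{\ast}h_k+\eta^{\ast}\sum_{i=1}^{k-1}h_i$ is only valid for $k\ge1$), so you are not missing any ingredient that the paper supplies.
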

\begin{proof}
Let us introduce the discrete counterpart
\[
	\cE_k := \paren*{\sum_{i=0}^{k-1} h_i \eta_i } \paren*{f(x_k) - f^\star} + \frac{1}{2}\norm*{x_k - x^\star}^2
\]
of $ \cE $ in the proof of \cref{thm:convrate}. 
Then, 
similar to the proof of \cref{thm:LMrate2}, 
we see that
\begin{equation}
	\cE_{k+1} - \cE_k \le \paren*{ \frac{1}{2} h_k - \sum_{i=0}^{k} h_i \eta_i } h_k (\eta_k)^2 \norm*{ \nabla f(x_k)}^2. 
\end{equation}
Since 
\begin{align}
    \frac{1}{2} h_k - \sum_{i=0}^{k} h_i \eta_i = \frac{1}{2} h_k - h_k \eta_k - \eta^{\ast} \sum_{i=0}^{k-1} h_{i+1} 
    = \paren*{ \frac{1}{2} - \eta_k - \eta^{\ast} } h_k - \eta^{\ast} \sum_{i=1}^{k-1} h_{i} 
    \le 0
\end{align}
holds owing to the assumption $ \eta^{\ast} \ge \frac{1}{2} $, 
we see that $ \cE_{k+1} - \cE_k \le 0 $. 
Therefore, we see that
\[
	\frac{1}{2}\norm*{x_0 - x^\star}^2 = \cE_{0} \geq \cE_k \geq \left(\sum_{i=0}^{k-1}h_i\eta_i\right)(f(x_k) - f^\star) \geq k \eta^{\ast} h_{\mathrm{LB}} (f(x_k) - f^\star),
\]
which proves the theorem. 
\end{proof}

\subsubsection{Functions satisfying P{\L} inequality}

In this section, we deal with functions satisfying P{\L} inequality. 
In this case, we need the upper bound of $ \{ h_k \}_{k=0}^{\infty} $ as well as the lower bound. 

\begin{lemma}
Assume that $f$ satisfies the Polyak--{\L}ojasiewicz inequality~\eqref{eq:PL} with parameter $\mu > 0$, and $ h_0 \le h_{\mathrm{UB}} := \frac{1}{ 2 \mu (\eta^{\ast})^2 } $ holds. Then, $ h_k \le h_{\mathrm{UB}} $ holds for any positive integer $k$.  
\end{lemma}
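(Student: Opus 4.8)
The plan is to argue by induction on $k$; the base case $h_0 \le h_{\mathrm{UB}}$ is exactly the hypothesis. For the inductive step I would assume $h_k \le h_{\mathrm{UB}}$ and use the update rule $h_{k+1} = h_k \eta_k / \eta^{\ast}$ of \cref{alg:adaptive}. Everything then hinges on controlling the backtracking output $\eta_k$ from above by the quantity $\paren*{2\mu h_k}^{-\frac{1}{2}}$ that already appears in \cref{thm:ubPL}.

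The key step is to upgrade the single-point estimate behind \cref{thm:ubPL} into a statement valid on a whole half-line. Inside the outer loop we have $\nabla f(x_k) \neq 0$, and using $f \ge f^\star$ together with the P{\L} inequality $f(x_k) - f^\star \le \frac{1}{2\mu}\norm*{\nabla f(x_k)}^2$ I would obtain, for every $\eta > 0$,
\[
	F_{h_k}(\eta; x_k) \ge \paren*{h_k \eta^2 - \frac{1}{2\mu}} \norm*{\nabla f(x_k)}^2,
\]
which is strictly positive whenever $\eta > \paren*{2\mu h_k}^{-\frac{1}{2}}$. Since the inner loop of \cref{alg:adaptive} starts from $\eta_k = 1$, repeatedly shrinks $\eta_k$ by the factor $\alpha$, and halts at the first value with $F_{h_k}(\eta_k; x_k) \le 0$, it cannot halt at any $\eta_k$ exceeding $\paren*{2\mu h_k}^{-\frac{1}{2}}$; hence $\eta_k \le \paren*{2\mu h_k}^{-\frac{1}{2}}$.

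With this bound the inductive step is a one-line computation:
\[
	h_{k+1} = \frac{h_k \eta_k}{\eta^{\ast}} \le \frac{h_k}{\eta^{\ast}} \paren*{2\mu h_k}^{-\frac{1}{2}} = \frac{1}{\eta^{\ast}} \sqrt{\frac{h_k}{2\mu}} \le \frac{1}{\eta^{\ast}} \sqrt{\frac{h_{\mathrm{UB}}}{2\mu}} = h_{\mathrm{UB}},
\]
where the last equality uses $h_{\mathrm{UB}} = 1/\paren*{2\mu (\eta^{\ast})^2}$. This closes the induction.

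I expect the only real obstacle to be the half-line positivity $F_{h_k}(\eta; x_k) > 0$ for $\eta > \paren*{2\mu h_k}^{-\frac{1}{2}}$, because \cref{thm:ubPL} only guarantees a root below that threshold and says nothing about the sign of $F_{h_k}$ for large $\eta$; for a nonconvex P{\L} function $F_{h_k}$ need not be monotone, so one cannot simply invoke monotonicity. The point to get right is that the crude lower bound above—using only $f \ge f^\star$ rather than any delicate structure of $F_{h_k}$—already forces strict positivity on the entire half-line, which is precisely what makes the backtracking respect the threshold.
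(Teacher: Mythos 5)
Your proof is correct and follows essentially the same route as the paper: bound the backtracking output by $\eta_k \le \paren*{2\mu h_k}^{-1/2}$ and then close the induction with the one-line computation $h_{k+1} = h_k\eta_k/\eta^{\ast} \le \frac{1}{\eta^{\ast}}\sqrt{h_k/(2\mu)} \le h_{\mathrm{UB}}$. The paper merely asserts the $\eta_k$ bound ``in a manner similar to the proof of \cref{thm:ubPL}''; your half-line positivity argument $F_{h_k}(\eta;x_k) \ge \paren*{h_k\eta^2 - \tfrac{1}{2\mu}}\norm*{\nabla f(x_k)}^2 > 0$ for all $\eta > \paren*{2\mu h_k}^{-1/2}$ supplies exactly the detail needed to transfer that bound from an exact root to the backtracking output, so your write-up is, if anything, more complete than the paper's.
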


\begin{proof}
    In a manner similar to the proof of \cref{thm:ubPL}, we see that $ \eta_k \le \sqrt{\frac{1}{2 \mu h_k}} $. 
    Then, we prove the lemma by induction. 
    Suppose that $ h_k \le h_{\mathrm{UB}}$ holds. 
    Then, we see that
    \[ h_{k+1} = \frac{ h_k \eta_k }{\eta^{\ast}} \le \frac{1 }{\eta^{\ast}} \sqrt{ \frac{ h_k }{ 2 \mu } } \le \frac{1 }{\eta^{\ast}} \sqrt{ \frac{ h_{\mathrm{UB}} }{ 2 \mu } } = h_{\mathrm{UB}}, \]
    which proves the lemma. 
\end{proof}

\begin{theorem}\label{thm:LMrate_ad_PL}
Assume that $ h_0 \in [h_{\mathrm{LB}},h_{\mathrm{UB}}] $ holds. 
If $f$ satisfies the Polyak--{\L}ojasiewicz inequality~\eqref{eq:PL} with parameter $\mu > 0$, 
the sequence $\setE*{x_k}_{k=0}^\infty$ obtained by \cref{alg:adaptive} satisfies
\[
	f(x_k) - f^\star \leq \exp\paren*{- \frac{16 \alpha (\alpha - \eta^{\ast}) (\eta^{\ast})^2 }{ \kappa \paren*{ \kappa + 4  (\eta^{\ast})^2 } } k } \paren*{f(x_0) - f^\star},
\]
where $ \kappa := L / \mu $ is the condition number. 
\end{theorem}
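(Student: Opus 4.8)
The plan is to imitate the proof of \cref{thm:LMrate3}, replacing the constant step size $h$ by the adaptive $h_k$ and then supplying a uniform lower bound for the per-step contraction factor. Writing $\cL_k := f(x_k) - f^\star$, I would first observe that the inner loop of \cref{alg:adaptive} terminates only when $F_{h_k}(\eta_k; x_k) \le 0$; unfolding the definition of $F_{h_k}$ together with $x_{k+1} = x_k - h_k \eta_k \nabla f(x_k)$ yields $f(x_{k+1}) - f(x_k) \le - h_k (\eta_k)^2 \norm*{\nabla f(x_k)}^2$, and the P\L{} inequality~\eqref{eq:PL} converts this into
\[
	\cL_{k+1} \le \paren*{ 1 - 2 \mu h_k (\eta_k)^2 } \cL_k .
\]
Thus everything reduces to bounding $2 \mu h_k (\eta_k)^2$ from below, uniformly in $k$, by the constant $c := \frac{16 \alpha (\alpha - \eta^{\ast}) (\eta^{\ast})^2}{\kappa \paren*{ \kappa + 4 (\eta^{\ast})^2 }}$.

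The crux is this uniform lower bound, and the decisive move is to split $h_k (\eta_k)^2 = \paren*{ h_k \eta_k } \eta_k$ and estimate the two factors by different means. For the second factor I would use $\eta_k \ge \frac{2 \alpha}{L h_k + 2}$ (noted after \cref{lem:lb_bt}) together with the upper bound $h_k \le h_{\mathrm{UB}}$, giving $\eta_k \ge \frac{2\alpha}{L h_{\mathrm{UB}} + 2} = \frac{4 \alpha (\eta^{\ast})^2}{\kappa + 4 (\eta^{\ast})^2}$. For the first factor I would exploit the update rule $h_{k+1} = h_k \eta_k / \eta^{\ast}$ in the equivalent form $h_k \eta_k = \eta^{\ast} h_{k+1}$, so that the lower-bound lemma $h_{k+1} \ge h_{\mathrm{LB}}$ gives $h_k \eta_k \ge \eta^{\ast} h_{\mathrm{LB}} = \frac{2 (\alpha - \eta^{\ast})}{L}$. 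Multiplying the two estimates and using $\mu / L = 1/\kappa$ produces exactly $2 \mu h_k (\eta_k)^2 \ge c$. Both step-size bounds $h_{\mathrm{LB}} \le h_k \le h_{\mathrm{UB}}$ are available for every $k$ because $h_0 \in [h_{\mathrm{LB}}, h_{\mathrm{UB}}]$ by hypothesis and the two preceding lemmas propagate each bound inductively.

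Finally I would iterate the contraction. To telescope the product $\prod_{i=0}^{k-1} \paren*{ 1 - 2 \mu h_i (\eta_i)^2 }$ safely, I would record that each factor is nonnegative: the bound $\eta_k \le (2 \mu h_k)^{-1/2}$ from the preceding upper-bound lemma (proved as in \cref{thm:ubPL}) forces $2 \mu h_k (\eta_k)^2 \le 1$. Combining this with $2 \mu h_i (\eta_i)^2 \ge c$ and $\cL_i \ge 0$, the chain $\cL_k \le (1 - c)^k \cL_0$ follows, and $1 + r \le \re^r$ with $r = -c$ delivers the stated bound. I expect the only real obstacle to be the second paragraph: the naive route of minimizing $h / (L h + 2)^2$ over $[h_{\mathrm{LB}}, h_{\mathrm{UB}}]$ does not reproduce the stated constant, and one must instead recognize that the factorization $h_k (\eta_k)^2 = (h_k \eta_k) \eta_k$ lets the step-size recursion feed the lower bound on $h_{k+1}$ into one factor while the upper bound on $h_k$ controls the other.
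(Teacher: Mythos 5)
Your proposal is correct and follows essentially the same route as the paper: the same descent-plus-P{\L} contraction $\cL_{k+1}\le(1-2\mu h_k(\eta_k)^2)\cL_k$, and the same decisive factorization $h_k(\eta_k)^2=(h_k\eta_k)\eta_k=\eta^{\ast}h_{k+1}\eta_k$ bounded via $h_{k+1}\ge h_{\mathrm{LB}}$ and $\eta_k\ge 2\alpha/(Lh_{\mathrm{UB}}+2)$, yielding the identical constant. Your extra remark that $2\mu h_k(\eta_k)^2\le 1$ (so the contraction factor is nonnegative) is a small point the paper leaves implicit, but otherwise the arguments coincide.
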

\begin{proof}
We introduce the discrete counterpart $ \cL_k := f(x_k) - f^\star $ of $ \cL $ in the proof of \cref{thm:sconvrate}. 
Then, we see that
\begin{align}
	\cL_{k+1} - \cL_k = f(x_{k+1}) - f(x_k) 
					 \le -h_k (\eta_k)^2\norm{\nabla f(x_k)}^2 
					 \leq -2\mu h_k (\eta_k)^2 \paren*{f(x_k) - f^\star}.		
\end{align}
By using $ h_k (\eta_k)^2 = \eta^{\ast} h_{k+1} \eta_k \ge \eta^{\ast} h_{\mathrm{LB}} \frac{2\alpha}{Lh_{\mathrm{UB}}+2} = \frac{8 \alpha \mu (\alpha - \eta^{\ast}) (\eta^{\ast})^2 }{ L \paren*{ L + 4 \mu (\eta^{\ast})^2 } } $, we obtain
\[
	\cL_{k+1} \leq \paren*{1 - \frac{16 \alpha \mu^2 (\alpha - \eta^{\ast}) (\eta^{\ast})^2 }{ L \paren*{ L + 4 \mu (\eta^{\ast})^2 } } } \cL_k, 
\]
which proves the theorem. 
\end{proof}

\section{Numerical experiments}
\label{sec:ne}

The efficacy of the Lagrange multiplier method for differential equations is well described by Cheng, Liu, and Shen~\cite{CLSJ2020}. 
Therefore, in this section, we focus on the application for optimization: 
we compare \cref{alg:bt,alg:adaptive} with the steepest descent method with a fixed step size $ h = 1/L $ and the step size satisfying the standard Armijo rule:
\[ f(x_k - h_k \nabla f(x_k)) - f(x_k) \le - c h_k \norm*{  \nabla f(x_k) }^2, \]
where $ c \in (0,1) $ is a parameter, and $h_k$ is obtained by a standard backtracking line search with the parameter $ \alpha \in (0,1)$. 

Throughout the numerical experiment in this section,
the parameter $ \alpha $ in the Armijo rule and \cref{alg:bt,alg:adaptive} is fixed at $ \alpha = 0.8 $.
Because we investigate the difference in the results depending on the step size criteria in this experiment, we choose the parameter $ \alpha $ corresponding to a relatively precise line search.
In addition, we fix the parameter $ \eta^{\ast} = 0.5 $ in view of \cref{thm:LMrate_ad_conv}. 

\subsection{Quadratic function}
\label{subsec:quad}

First, we consider the quadratic function
\begin{equation}\label{eq:prob_quad}
    f(x) = \frac{1}{2} \inprd{x}{Ax} + \inprd{b}{x},
\end{equation}
where $ A \in \RR^{n \times n} $ and $ b \in \RR^{n} $. 
In this section, we fix $ n = 500 $ and $ b \in \RR^{n} $, whose elements are independently sampled from the normal distribution $ \mathcal{N} (0,5) $. 
We also fix the symmetric positive definite matrix $A \in \RR^{ n \times n }$, defined as $ A = Q^{\top} \Lambda Q $ by using a diagonal matrix $ \Lambda $, whose elements are sampled from a uniform distribution on $[0.001, 1]$, 
and an orthogonal matrix $Q$ that was sampled from the Haar measure on the orthogonal group.
The resulting matrix $A$ has the maximum eigenvalue of $ L \approx 0.998 $ and minimum eigenvalue of $\mu \approx 0.0022$. 
We set the initial step size of the backtracking line search for the Armijo rule to $ 10 $.

\begin{figure}[ht]
\includegraphics{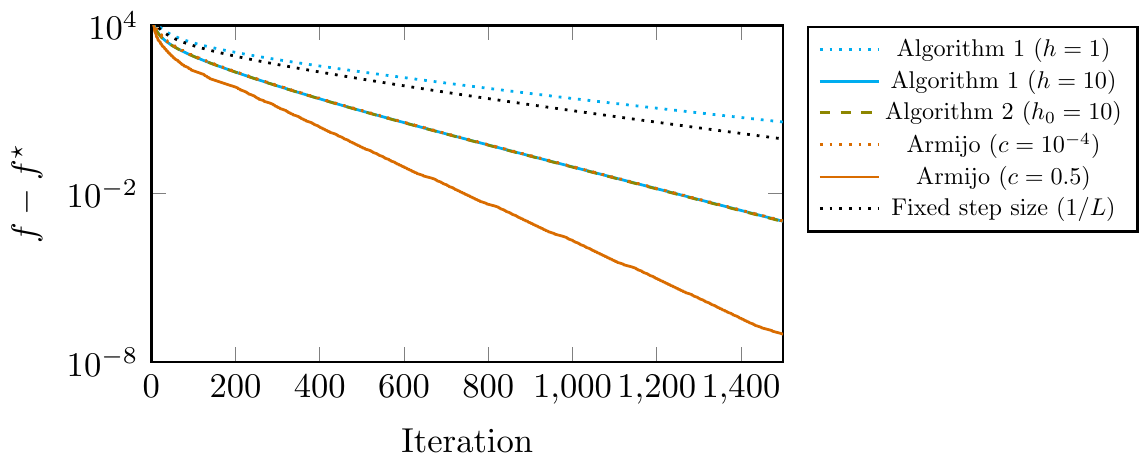}
    \caption{Evolution of function values for the quadratic function~\eqref{eq:prob_quad}.}
    \label{fig:quad}
\end{figure}

\begin{table}[ht]
\centering
\caption{Average step size and number of backtracking iterations for the quadratic function~\eqref{eq:prob_quad}. }
\label{tab:quad}
\begin{tabular}{c||ccc|ccc|ccc}\hline
    Method     & \multicolumn{3}{c|}{Armijo ($c$)} & \multicolumn{3}{c|}{\cref{alg:bt} ($h$)} & \multicolumn{3}{c}{\cref{alg:adaptive} ($h_0$)} \\ \hline
    Parameter  & $10^{-4}$ & $0.1$   & $ 0.5 $     & $1$   & $10$    & $100$           & $1$     & $10$    & $100$ \\ \hline \hline
    step size  & $2.017$   & $2.016$ & $3.398$     & $0.8$ & $2.016$ & $2.024$         & $2.022$ & $2.020$ & $2.020$ \\
    \# iterations & $7.19$  & $7.19$  & $6.16$      & $1$   & $7.19$  & $17.51$         & $3.10$  & $3.11$  & $3.12$ \\ \hline
\end{tabular}
\end{table}

\Cref{fig:quad} summarizes the evolution of function values and \cref{tab:quad} summarizes the average step size and the number of backtracking iterations. 
In \cref{fig:quad}, we omit the Armijo rule with $ c = 0.1 $, \cref{alg:bt} with $h=100$, and \cref{alg:adaptive} with $h_0 = 1, 100$ because they are very similar to the Armijo rule with $ c = 10^{-4} $, \cref{alg:bt} with $h=10$, and \cref{alg:adaptive} with $h_0 = 10$, respectively. 

The results of \Cref{alg:bt} with an appropriate $h$ and \cref{alg:adaptive} are similar to those of the Armijo rule with a small $c$. 
Because the Armijo rule with $c = 0.5$ is similar to the exact line search for quadratic functions, it overwhelms the other methods.

\subsection{Log-Sum-Exp function}
\label{subsec:lse}

Second, we consider the Log-Sum-Exp function:
\begin{equation}\label{eq:prob_lse}
    f(x) = \rho \log \paren*{ \sum_{i=1}^m \exp \paren*{ \frac{ \inprd{a_i}{x} - b_i }{\rho} } }. 
\end{equation}
where $ a_i \in \RR^n \ (1 \le i \le m)$, $ b_i \in \RR \ (1 \le i \le m) $ and $ \rho > 0 $. 
In this section, we fix $ n = 50 $, $m = 200 $, and $ \rho = 20 $. 
We also fix $ a_i $ and $ b_i $, whose elements are independently sampled from the normal distribution $ \mathcal{N} (0,1) $ and $ \mathcal{N} (0,\sqrt{2}) $, respectively. 
The resulting $ a_i $ satisfies $ \max_{ 1 \le k \le m } \norm{a_k}^2 \approx 42.687 $, and 
the Lipschitz constant $L$ satisfies $ L \le \max_{ 1 \le k \le m } \norm{a_k}^2 $. 
We set the initial step size of the backtracking line search for the Armijo rule to $ 100 $.

\begin{figure}[ht]
\includegraphics{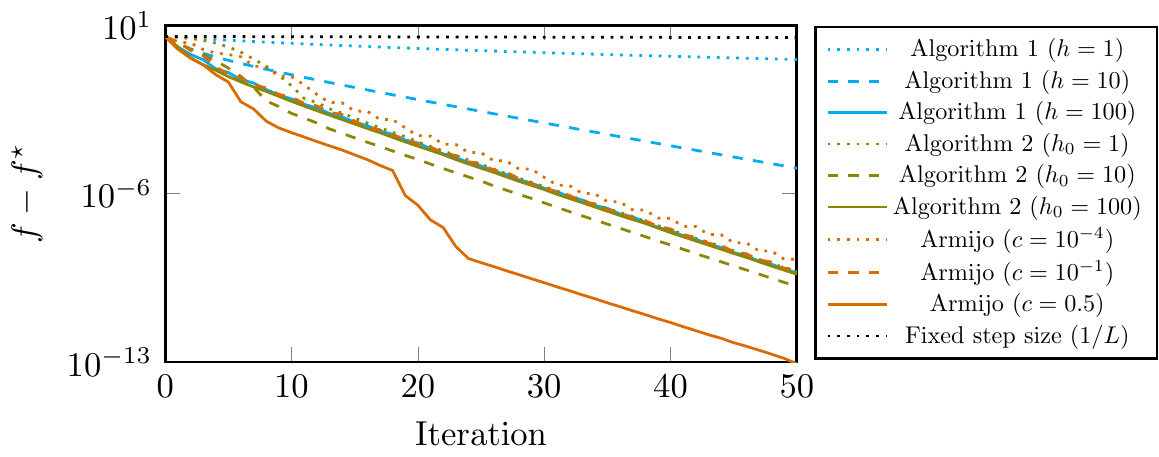}
    \caption{Evolution of function values for the Log-Sum-Exp function~\eqref{eq:prob_lse}.}
    \label{fig:lse}
\end{figure}

\begin{table}[ht]
\centering
\caption{Average step size and number of backtracking iterations for the Log-Sum-Exp function~\eqref{eq:prob_lse}. }
\label{tab:lse}
\begin{tabular}{c||ccc|ccc|ccc}\hline
    Method     & \multicolumn{3}{c|}{Armijo ($c$)} & \multicolumn{3}{c|}{\cref{alg:bt} ($h$)} & \multicolumn{3}{c}{\cref{alg:adaptive} ($h_0$)} \\ \hline
    Parameter  & $10^{-4}$ & $0.1$    & $ 0.5 $    & $1$   & $10$    & $100$           & $1$     & $10$    & $100$ \\ \hline \hline
    step size  & $15.44$   & $15.29$  & $18.80$    & $0.8$ & $7.97$  & $15.18$         & $14.66$ & $15.67$ & $15.07$ \\
    \# iterations & $8.42$  & $8.46$   & $8.14$     & $1$   & $1.02$  & $8.48$          & $2.80$  & $3.02$  & $3.22$ \\ \hline
\end{tabular}
\end{table}

\Cref{fig:lse} summarizes the evolution of function values and \cref{tab:lse} summarizes the average step size and the number of backtracking iterations. 
The results of \cref{alg:bt} with an appropriate $h$ and \cref{alg:adaptive} are similar to those of the Armijo rule with a small $c$. 
Although the Armijo rule with $c = 0.5$ converges faster than the other methods, 
the rate itself is similar to \cref{alg:bt} with an appropriate $h$ and \cref{alg:adaptive}.

\subsection{A nonconvex function satisfying P{\L} inequality}
\label{subsec:noncon}

Finally, we consider the function
\begin{equation}\label{eq:prob_noncon}
    f(x) = \norm*{x}^2 + 3 \sin^2 \paren*{ \inprd{b}{x} }
\end{equation}
used in \cite{ERRS2018}, where 
$ b \in \RR^n $ is a vector that satisfies $ \norm*{b} = 1 $. 
This function is $8$-smooth, nonconvex, and satisfies the Polyak--{\L}ojasiewicz inequality~\ref{eq:PL} with parameter $\mu = 1/32$. 
In this section, we fix $ n = 50 $ and $ b = v / \norm*{v} \in \RR^{n} $, where the elements of $v \in \RR^n$ are independently sampled from the normal distribution $ \mathcal{N} (0,1) $. 
We set the initial step size of the backtracking line search for the Armijo rule to $ 10 $.

\begin{figure}[ht]
\includegraphics{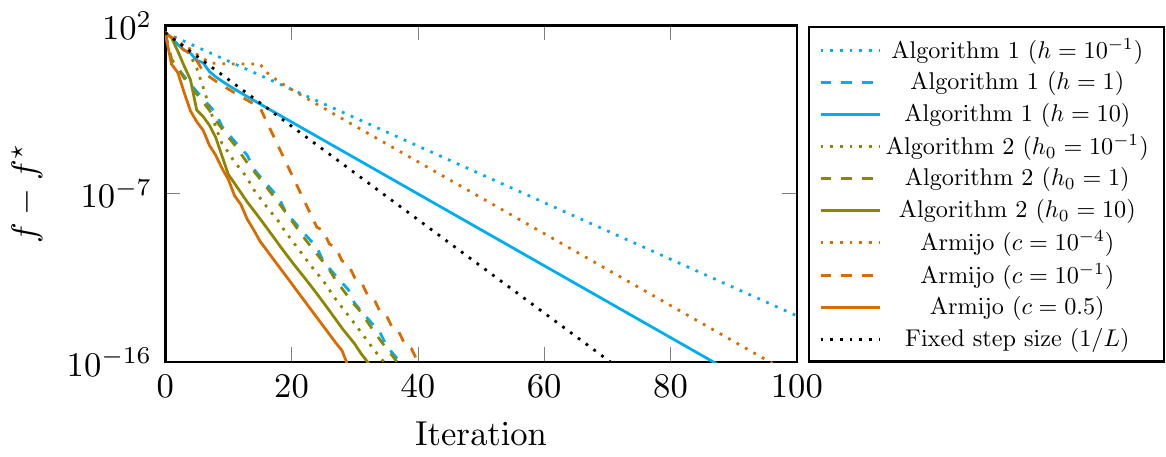}
    \caption{Evolution of function values for the nonconvex function~\eqref{eq:prob_noncon}.}
    \label{fig:noncon}
\end{figure}

\begin{table}[ht]
\centering
\caption{Average step size and number of backtracking iterations for the nonconvex function~\eqref{eq:prob_noncon}. }
\label{tab:noncon}
\begin{tabular}{c||ccc|ccc|ccc}\hline
    Method     & \multicolumn{3}{c|}{Armijo ($c$)} & \multicolumn{3}{c|}{\cref{alg:bt} ($h$)} & \multicolumn{3}{c}{\cref{alg:adaptive} ($h_0$)} \\ \hline
    Parameter  & $10^{-4}$ & $0.1$    & $ 0.5 $    & $0.1$  & $1$     & $10$            & $1$     & $10$    & $100$ \\ \hline \hline
    step size  & $0.260$   & $0.227$  & $0.235$    & $0.08$ & $0.205$ & $0.249$         & $0.207$ & $0.204$ & $0.215$ \\
    \# iterations & $16.6$  & $17.2$   & $17.2$     & $1$    & $7.1$   & $16.8$          & $3.04$  & $3.15$  & $3.26$ \\ \hline
\end{tabular}
\end{table}

\Cref{fig:noncon} summarizes the evolution of function values and \cref{tab:noncon} summarizes the average step size and the number of backtracking iterations. 
The results of \cref{alg:bt} with an appropriate $h$ and \cref{alg:adaptive} are similar to those of the Armijo rule.

\section{Conclusion}
\label{sec:conclusion}

In this paper, we established existence results on the Lagrange multiplier approach, a recent geometric numerical integration technique, for the gradient system. 
In addition, we showed that, when $ Q $ is the zero matrix, the Lagrange multiplier approach reads a new step-size criterion for the steepest descent method. 
Thanks to the discrete dissipation law, 
the convergence rates of the proposed method for several cases can be proved in a form similar to the discussions on ODEs. 
In this paper, we focused only on the simplest gradient flow, 
but the results suggest that geometric numerical integration techniques can be effective for other ODEs appearing in optimization problems. 

Several issues remain to be investigated. 
First, it would be interesting to investigate the application of geometric numerical integration techniques to other ODEs that appear during optimization. 
Second, the existence results in this paper are only for a special case of the Lagrange multiplier approach. 
Because the assumption $ x_{k+1/2}^{\ast} := x_k $ is a bit restrictive in the usual numerical integration of ODEs and PDEs, 
it is important to generalize the existence results.

\section*{Acknowledgements}

The authors are grateful to Takayasu Matsuo and Naoki Marumo for their valuable comments. 

\appendix

\section{An extension of \cref{existence:special}}
\label{app:ex}

In this section, we consider the scheme~\eqref{eq:LM} with the assumption 
$ x^{\ast}_{k+1/2} = x_k $ and $ Q = 0 $ 
(note that we further assume $ \pdmat = I $ in \cref{existence:special}). 
In this case, the scheme can be written as
\begin{subequations}\label{eq:LMS_A}
\begin{align}
	\frac{x_{k+1} - x_k}{h} &= -\eta_k \pdmat \nabla f(x_k), \label{eq:LMS1_A}\\
	f(x_{k+1}) - f(x_k) &= \eta_k \inprd*{\nabla f(x_k)}{x_{k+1} - x_k}.  \label{eq:LMS2_A}
\end{align}
\end{subequations}
Then, $ x_{k+1} $ can be computed by solving a scalar nonlinear equation
\[
	F_h (\eta_k; x_k) = f\paren*{x_k - \eta_k h \pdmat \nabla f(x_k)} - f(x_k) +h(\eta_k)^2 \inprd*{ \nabla f(x_k) }{ \pdmat \nabla f (x_k) } = 0. 
\]

Even in this case, the counterparts of \cref{thm:sol1,thm:ub,thm:ubconv,thm:ubPL} hold as follows. 
We omit their proofs because they are similar to those of the counterparts in \cref{existence:special}. 

\begin{theorem}\label{thm:sol1_A}
For any $x_k \in \RR^n$, there exists an $\eta_k$ that satisfies $F_h (\eta_k; x_k) = 0$ and 
\[
	\eta_k \geq \paren*{1 + \frac{Lh}{2 \rqmin \paren*{ \pdmat^{-1} } }}^{-1} > 0.
\]
\end{theorem}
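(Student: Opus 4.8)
The plan is to follow the proof of \cref{thm:sol1} almost verbatim, the only genuinely new step being an estimate that recasts the ratio of the two quadratic coefficients in terms of the Rayleigh quotient $\rqmin\paren*{\pdmat^{-1}}$.

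First I would dispose of the degenerate case $\nabla f(x_k) = 0$, in which $F_h\paren*{\eta_k;x_k} \equiv 0$ and the bound holds trivially. Assuming $\nabla f(x_k) \neq 0$ and abbreviating $g := \nabla f(x_k)$, positive definiteness of $\pdmat$ gives $\inprd*{g}{\pdmat g} > 0$; together with $\argmin f \neq \emptyset$ (so $f$ is bounded below) this forces $\lim_{\eta_k \to \infty} F_h\paren*{\eta_k;x_k} = \infty$, because the term $h\paren*{\eta_k}^2 \inprd*{g}{\pdmat g}$ dominates. Next I would apply the upper bound in \cref{prop:ltan} to the increment $x_{k+1} - x_k = -\eta_k h \pdmat g$ to obtain
\[
    F_h\paren*{\eta_k;x_k} \le \eta_k h \paren*{ \eta_k \paren*{ \inprd*{g}{\pdmat g} + \frac{Lh}{2}\norm*{\pdmat g}^2 } - \inprd*{g}{\pdmat g} },
\]
whose bracketed factor is affine and increasing in $\eta_k$, vanishing at $\eta^\star := \inprd*{g}{\pdmat g}\big/\paren*{\inprd*{g}{\pdmat g} + \frac{Lh}{2}\norm*{\pdmat g}^2}$.

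The crux is to bound $\eta^\star$ below by the target value. Substituting $v := \pdmat g$ into the Rayleigh quotient of $\pdmat^{-1}$ gives $\inprd*{g}{\pdmat g}\big/\norm*{\pdmat g}^2 = \inprd*{v}{\pdmat^{-1} v}\big/\norm*{v}^2 \ge \rqmin\paren*{\pdmat^{-1}}$, whence
\[
    \eta^\star = \paren*{ 1 + \frac{Lh}{2}\frac{\norm*{\pdmat g}^2}{\inprd*{g}{\pdmat g}} }^{-1} \ge \paren*{ 1 + \frac{Lh}{2\rqmin\paren*{\pdmat^{-1}}} }^{-1}.
\]
Evaluating the bracketed factor at this lower bound makes it nonpositive, so $F_h \le 0$ there; combined with $F_h \to \infty$, the intermediate value theorem produces a root at least as large, which is exactly the assertion. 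The one point requiring care---and the main obstacle, since $\pdmat$ need not be symmetric---is that $\rqmin\paren*{\pdmat^{-1}} > 0$ so that the bound is well defined; this follows from writing $u := \pdmat^{-1} v$, so that $\inprd*{v}{\pdmat^{-1} v} = \inprd*{\pdmat u}{u} = \inprd*{u}{\pdmat u} > 0$ for $v \neq 0$.
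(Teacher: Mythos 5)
Your proof is correct and is essentially the argument the paper intends: the appendix omits the proof of \cref{thm:sol1_A} as ``similar to'' that of \cref{thm:sol1}, and your adaptation --- applying \cref{prop:ltan} to the increment $-\eta_k h \pdmat \nabla f(x_k)$ and then bounding $\inprd*{g}{\pdmat g}/\norm*{\pdmat g}^2$ below by $\rqmin\paren*{\pdmat^{-1}}$ via the substitution $v = \pdmat g$ --- is exactly the missing step, consistent with how the paper already uses $\rqmin\paren*{\pdmat^{-1}}$ in \cref{lem:ex_gen_pos}. Your closing remark that $\rqmin\paren*{\pdmat^{-1}} > 0$ for non-symmetric positive definite $\pdmat$ is a worthwhile check that the stated bound is well defined.
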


\begin{theorem}\label{thm:ub_A}
Assume that $\nabla f(x_k) \neq 0$ and $h \le {2 \rqmin \paren*{ \pdmat^{-1} } }/{L}$ hold. 
If $\eta_k > 0$ satisfies $F_h (\eta_k; x_k) = 0$, then
\[
	\paren*{1 + \frac{Lh}{2\rqmin \paren*{ \pdmat^{-1} }}}^{-1} \leq \eta_k \leq \paren*{1 - \frac{Lh}{2\rqmin \paren*{ \pdmat^{-1} }}}^{-1}
\]
holds. 
\end{theorem}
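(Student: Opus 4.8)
The plan is to mirror the proof of \cref{thm:ub}, which is the $\pdmat = I$ special case, replacing the Euclidean quadratic terms by their $\pdmat$-weighted counterparts. Writing $d_k := -\nabla f(x_k)$, the nonlinear equation at hand is
\[
	F_h (\eta_k; x_k) = f\paren*{x_k + \eta_k h \pdmat d_k} - f(x_k) + h(\eta_k)^2 \inprd*{ d_k }{ \pdmat d_k } = 0,
\]
where I have used $\nabla f(x_k) = -d_k$. The key structural observation is that $ \inprd*{ d_k }{ \pdmat d_k } \ge \rqmin \paren*{ \pdmat } \norm*{ d_k }^2 $ and, more to the point, that the step direction $\pdmat d_k$ satisfies $\norm*{\pdmat d_k}^2 = \inprd*{\pdmat d_k}{\pdmat d_k}$, so the $L$-smoothness bounds from \cref{prop:ltan}, applied to the pair $x_k$ and $x_k + \eta_k h \pdmat d_k$, introduce the factor $\norm*{\pdmat d_k}^2$ rather than $\norm*{d_k}^2$. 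Reconciling these two different quadratic forms is what produces the Rayleigh quotient $\rqmin \paren*{ \pdmat^{-1} }$ in the bound, so the first step is to track carefully which quadratic form appears where.

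First I would establish the lower bound. Applying the upper inequality of \cref{prop:ltan} with $y = x_k + \eta_k h \pdmat d_k$ gives
\[
	F_h (\eta_k; x_k) \le -\eta_k h \inprd*{ d_k }{ \pdmat d_k } + \frac{L}{2} \norm*{ \eta_k h \pdmat d_k }^2 + h (\eta_k)^2 \inprd*{ d_k }{ \pdmat d_k }.
\]
The crucial manipulation is to bound $\norm*{\pdmat d_k}^2$ in terms of $\inprd*{ d_k }{ \pdmat d_k }$: writing $w := \pdmat d_k$, we have $\norm*{w}^2 = \inprd*{ w }{ w }$ and $\inprd*{ d_k }{ \pdmat d_k } = \inprd*{ \pdmat^{-1} w }{ w } \ge \rqmin \paren*{ \pdmat^{-1} } \norm*{ w }^2$, hence $\norm*{\pdmat d_k}^2 \le \inprd*{ d_k }{ \pdmat d_k } / \rqmin \paren*{ \pdmat^{-1} }$. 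Substituting yields
\[
	F_h (\eta_k; x_k) \le \eta_k h \inprd*{ d_k }{ \pdmat d_k } \paren*{ \eta_k \paren*{ 1 + \frac{Lh}{2 \rqmin \paren*{ \pdmat^{-1} } } } - 1 },
\]
so that $F_h (\eta_k; x_k) < 0$ for all $\eta_k$ in the open interval between $0$ and $\paren*{1 + \frac{Lh}{2\rqmin \paren*{ \pdmat^{-1} }}}^{-1}$. Since a positive root exists by assumption, it cannot lie in this interval, giving the lower bound.

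For the upper bound I would run the same computation with the lower inequality of \cref{prop:ltan}, which flips the sign of the $L$-term and produces
\[
	F_h (\eta_k; x_k) \ge \eta_k h \inprd*{ d_k }{ \pdmat d_k } \paren*{ \eta_k \paren*{ 1 - \frac{Lh}{2 \rqmin \paren*{ \pdmat^{-1} } } } - 1 }.
\]
Here the hypothesis $h \le 2 \rqmin \paren*{ \pdmat^{-1} } / L$ guarantees the coefficient $1 - \frac{Lh}{2 \rqmin \paren*{ \pdmat^{-1} }}$ is nonnegative, so for any $\eta_k$ exceeding $\paren*{1 - \frac{Lh}{2\rqmin \paren*{ \pdmat^{-1} }}}^{-1}$ the bracket is positive and $F_h (\eta_k; x_k) > 0$, forcing any positive root to satisfy the stated upper bound. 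The main obstacle, such as it is, is purely bookkeeping: keeping straight that the smoothness estimates naturally generate $\norm*{\pdmat d_k}^2$ while the nonlinear-equation term carries $\inprd*{ d_k }{ \pdmat d_k }$, and verifying that the inequality $\norm*{\pdmat d_k}^2 \le \inprd*{ d_k }{ \pdmat d_k } / \rqmin \paren*{ \pdmat^{-1} }$ is precisely what converts the $\pdmat = I$ constant $2/L$ into $2 \rqmin \paren*{ \pdmat^{-1} } / L$. No genuinely new idea beyond \cref{thm:ub} is required, which is consistent with the paper's remark that the proofs are omitted for being similar to the special case.
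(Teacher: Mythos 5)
Your proof is correct and follows exactly the route the paper intends: the paper omits the proof of this appendix theorem, referring to the proof of \cref{thm:ub}, and your argument is precisely that proof with the quadratic forms reconciled via $\norm{\pdmat d_k}^2 \le \inprd*{d_k}{\pdmat d_k}/\rqmin\paren*{\pdmat^{-1}}$, which is the same Rayleigh-quotient device the paper uses in \cref{lem:ex_gen_pos}. No discrepancy to report.
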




\begin{theorem}\label{thm:ubconv_A}
If $f$ is a convex function and $\nabla f(x_k) \neq 0$ holds, 
there exists a unique nontrivial solution $ \eta_k $ of the nonlinear equation $F_h (\eta_k; x_k) = 0$ satisfying
\[
	\paren*{1 + \frac{Lh}{2\rqmin \paren*{ \pdmat^{-1} }}}^{-1} \leq \eta_k \leq 1.
\]
\end{theorem}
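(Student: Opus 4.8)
The plan is to mirror the proof of \cref{thm:ubconv}, the case $\pdmat = I$, while carefully carrying the possibly non-symmetric matrix $\pdmat$ through the relevant quadratic form. Following the earlier proofs, I would set $d_k := -\nabla f(x_k)$, so that the equation becomes
\[
F_h(\eta_k; x_k) = f\paren*{x_k + \eta_k h \pdmat d_k} - f(x_k) + h(\eta_k)^2 \inprd*{d_k}{\pdmat d_k},
\]
and record that positive definiteness of $\pdmat$ gives $\inprd*{d_k}{\pdmat d_k} > 0$ whenever $d_k \neq 0$, which is the hypothesis $\nabla f(x_k) \neq 0$.

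First I would establish uniqueness of the nontrivial solution. The map $\eta_k \mapsto f(x_k + \eta_k h \pdmat d_k)$ is convex, being the composition of the convex function $f$ with an affine map of $\eta_k$, while the term $h(\eta_k)^2 \inprd*{d_k}{\pdmat d_k}$ is \emph{strictly} convex because its leading coefficient is positive. Hence $F_h(\cdot\,; x_k)$ is strictly convex and has at most two zeros; since $\eta_k = 0$ is always one of them, the nontrivial zero is unique.

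Next, the lower bound $\paren*{1 + Lh / (2\rqmin(\pdmat^{-1}))}^{-1}$ is exactly the quantity supplied by \cref{thm:sol1_A}, and its proof additionally shows $F_h(\eta_k; x_k) < 0$ on the open interval below this value, which locates the nontrivial root from the left. For the upper bound I would evaluate at $\eta_k = 1$ and invoke convexity of $f$:
\[
f\paren*{x_k + h \pdmat d_k} - f(x_k) \ge \inprd*{\nabla f(x_k)}{h \pdmat d_k} = -h \inprd*{d_k}{\pdmat d_k}.
\]
The decisive point is that the very same quadratic form $\inprd*{d_k}{\pdmat d_k}$ reappears as the explicit quadratic term of $F_h$, since $\inprd*{\nabla f(x_k)}{\pdmat \nabla f(x_k)} = \inprd*{d_k}{\pdmat d_k}$; adding it back yields $F_h(1; x_k) \ge 0$.

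Finally, the intermediate value theorem on the interval from $\paren*{1 + Lh/(2\rqmin(\pdmat^{-1}))}^{-1}$ to $1$ — where $F_h \le 0$ at the left endpoint and $F_h \ge 0$ at the right — produces a root there, which by strict convexity is the unique nontrivial solution, giving both bounds at once. I expect the only mild obstacle to be the bookkeeping forced by the non-symmetry of $\pdmat$: one must resist symmetrizing and instead observe that the standard inner product still gives $\inprd*{\nabla f(x_k)}{\pdmat \nabla f(x_k)} = \inprd*{d_k}{\pdmat d_k}$, so the cancellation at $\eta_k = 1$ goes through verbatim, exactly as in \cref{thm:ubconv}.
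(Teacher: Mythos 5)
Your proposal is correct and follows exactly the route the paper intends: the paper omits the proof of \cref{thm:ubconv_A} precisely because it is the proof of \cref{thm:ubconv} with $\norm{d_k}^2$ replaced by the quadratic form $\inprd*{d_k}{\pdmat d_k}$, which is what you carry out (strict convexity of $F_h$ in $\eta_k$ for uniqueness, the cancellation $F_h(1;x_k)\ge 0$ from convexity of $f$, the lower bound from the counterpart of \cref{thm:sol1}, and the intermediate value theorem). No gaps.
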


\begin{theorem}\label{thm:ubPL_A}
If $f$ satisfies the P{\L} inequality~\eqref{eq:PL} with parameter $ \mu > 0 $ and $\nabla f(x_k) \neq 0$ holds, 
there exists a nontrivial solution $ \eta_k $ of the nonlinear equation $F_h (\eta_k; x_k) = 0$ satisfying
\[
	\paren*{1 + \frac{Lh}{2\rqmin \paren*{ \pdmat^{-1} }}}^{-1} \leq \eta_k \leq \paren*{ 2 \mu h \rqmin \paren*{ \pdmat^{-1} } }^{-\frac{1}{2}}.
\]
\end{theorem}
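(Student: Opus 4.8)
The plan is to imitate the proof of \cref{thm:ubPL} almost verbatim, producing an explicit $\overline{\eta}$ at which $F_h$ is nonnegative and then closing the argument with the intermediate value theorem, using the lower endpoint already supplied by the reasoning behind \cref{thm:sol1_A}. Concretely, I would set
\[
  \overline{\eta} := \paren*{2\mu h\,\rqmin\paren*{\pdmat^{-1}}}^{-\frac{1}{2}},
  \qquad
  \overline{x} := x_k - \overline{\eta}\,h\,\pdmat\,\nabla f(x_k),
\]
so that the defining identity for $F_h$ in \eqref{eq:LMS2_A} gives
\[
  F_h\paren*{\overline{\eta}; x_k}
  = f(\overline{x}) - f(x_k) + h\,\overline{\eta}^2\,\inprd*{\nabla f(x_k)}{\pdmat\,\nabla f(x_k)},
\]
where the choice of $\overline{\eta}$ has been rigged precisely so that $h\,\overline{\eta}^2 = \paren*{2\mu\,\rqmin\paren*{\pdmat^{-1}}}^{-1}$.

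Next I would bound the quadratic term from below. The key point is that $h\,\overline{\eta}^2\,\inprd*{\nabla f(x_k)}{\pdmat\,\nabla f(x_k)}$ should be shown to dominate $f(x_k)-f^\star$. For this I would first pass from the $\pdmat$-weighted quantity to the ordinary norm via the Rayleigh quotient of $\pdmat$, reducing the claim to the estimate $\inprd*{\nabla f(x_k)}{\pdmat\,\nabla f(x_k)} \ge \rqmin\paren*{\pdmat^{-1}}\,\norm*{\nabla f(x_k)}^2$, and then invoke the P{\L} inequality~\eqref{eq:PL} in the form $\tfrac{1}{2}\norm*{\nabla f(x_k)}^2 \ge \mu\paren*{f(x_k)-f^\star}$. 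Combining the two yields $h\,\overline{\eta}^2\,\inprd*{\nabla f(x_k)}{\pdmat\,\nabla f(x_k)} \ge f(x_k)-f^\star$, and since $f(\overline{x}) \ge f^\star$ we conclude $F_h\paren*{\overline{\eta}; x_k} \ge f(\overline{x}) - f^\star \ge 0$. This is the exact structural analogue of the $\pdmat = I$ computation in \cref{thm:ubPL}, with $\rqmin\paren*{\pdmat^{-1}}$ playing the role of the constant $1$.

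Finally, I would combine this with the fact that $F_h(\eta; x_k) \le 0$ at the lower endpoint $\paren*{1 + \frac{Lh}{2\rqmin(\pdmat^{-1})}}^{-1}$, which follows from the $L$-smoothness computation underlying \cref{thm:sol1_A} (the quadratic-in-$\eta$ bound on $F_h$ together with the ratio estimate $\norm*{\pdmat\,\nabla f(x_k)}^2 / \inprd*{\nabla f(x_k)}{\pdmat\,\nabla f(x_k)} \le \rqmin\paren*{\pdmat^{-1}}^{-1}$). Since $F_h$ is nonpositive at the lower endpoint and nonnegative at $\overline{\eta}$, the intermediate value theorem furnishes a nontrivial root in the stated interval.

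The main obstacle I anticipate is the Rayleigh-quotient bookkeeping in the middle step, which is genuinely delicate for nonsymmetric $\pdmat$. The positive term carries $\inprd*{\nabla f(x_k)}{\pdmat\,\nabla f(x_k)}$ rather than $\norm*{\nabla f(x_k)}^2$, so everything hinges on comparing $\inprd*{y}{\pdmat y}/\norm*{y}^2$ with the prescribed constant appearing in $\overline{\eta}$; this comparison (and the verification that the constant chosen in $\overline{\eta}$ is the right one for every $x_k$) is where the behaviour of the symmetric part of $\pdmat$ must be handled carefully, in contrast with the trivial cancellation available in the $\pdmat = I$ case of \cref{thm:ubPL}.
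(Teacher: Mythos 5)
Your overall strategy (evaluate $F_h$ at $\overline{\eta}$, show nonnegativity via the P{\L} inequality, close with the intermediate value theorem) is exactly the route the paper intends — the appendix omits the proof precisely because it is meant to mirror that of \cref{thm:ubPL}. However, the step you yourself flag as delicate is where the argument breaks: the inequality
\[
\inprd*{\nabla f(x_k)}{\pdmat \nabla f(x_k)} \ \ge\ \rqmin\paren*{\pdmat^{-1}}\,\norm*{\nabla f(x_k)}^2
\]
to which you reduce the claim is false in general. The Rayleigh quotient of $\pdmat^{-1}$ controls $\inprd*{y}{\pdmat y} = \inprd*{\pdmat^{-1}(\pdmat y)}{\pdmat y} \ge \rqmin\paren*{\pdmat^{-1}}\norm*{\pdmat y}^2$, i.e.\ it compares $\inprd*{y}{\pdmat y}$ with $\norm*{\pdmat y}^2$, not with $\norm*{y}^2$; the latter comparison is governed by $\rqmin(\pdmat)$. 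A concrete counterexample: $\pdmat = \mathrm{diag}(1/10,\,1/5)$ and $y = (1,0)^\top$ give $\inprd*{y}{\pdmat y} = 1/10$ while $\rqmin\paren*{\pdmat^{-1}}\norm*{y}^2 = 5$. This distinction is invisible when $\pdmat = I$, which is why the computation in \cref{thm:ubPL} closes trivially there.

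The gap is not merely cosmetic: with the constant as printed, the conclusion itself can fail, so no proof along these lines can succeed. For $f(x) = \frac12\norm*{x}^2$ (so $L = \mu = 1$), $\pdmat = \mathrm{diag}(1/10,\,1/5)$, $x_k = (1,0)^\top$ and $h = 1$, the unique nontrivial root is $\eta_k = 20/21$, whereas the asserted interval is $[10/11,\ 1/\sqrt{10}\,]$, which is empty. Your argument does go through if $\rqmin\paren*{\pdmat^{-1}}$ is replaced by $\rqmin(\pdmat)$ in $\overline{\eta}$: then $h\,\overline{\eta}^2\,\inprd*{\nabla f(x_k)}{\pdmat\nabla f(x_k)} \ge \frac{1}{2\mu}\norm*{\nabla f(x_k)}^2 \ge f(x_k) - f^\star$ by the definition of $\rqmin(\pdmat)$ and the P{\L} inequality, and the rest of what you wrote — including the lower endpoint, where the ratio $\norm*{\pdmat\nabla f(x_k)}^2/\inprd*{\nabla f(x_k)}{\pdmat\nabla f(x_k)} \le \rqmin\paren*{\pdmat^{-1}}^{-1}$ is the correct estimate and $\rqmin\paren*{\pdmat^{-1}}$ genuinely belongs — is sound. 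In short: same approach as the paper, but your middle step needs $\rqmin(\pdmat)$, and the upper bound should accordingly read $\paren*{2\mu h\,\rqmin(\pdmat)}^{-1/2}$.
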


\bibliographystyle{abbrv}
\bibliography{reference}

\end{document}